\theoremstyle{plain}
\newtheorem{thm}{Theorem}[section]
\newtheorem*{thm*}{Theorem}
\newaliascnt{prop}{thm}
\newaliascnt{cor}{thm}
\newaliascnt{lem}{thm}
\newaliascnt{claim}{thm}
\newaliascnt{defn}{thm}
\newaliascnt{ques}{thm}
\newaliascnt{conj}{thm}
\newaliascnt{fact}{thm}
\newaliascnt{rem}{thm}
\newaliascnt{ex}{thm}
\newtheorem{prop}[prop]{Proposition}
\newtheorem{cor}[cor]{Corollary}
\newtheorem{lem}[lem]{Lemma}
\newtheorem{claim}[claim]{Claim}
\newtheorem*{prop*}{Proposition}
\newtheorem*{cor*}{Corollary}
\newtheorem*{lem*}{Lemma}
\newtheorem*{claim*}{Claim}
\theoremstyle{definition}
\newtheorem{defn}[defn]{Definition}
\newtheorem*{defn*}{Definition}
\newtheorem*{ques*}{Question}
\newtheorem*{conj*}{Conjecture}
\newtheorem*{prob*}{Problem}
\newtheorem{rem}[rem]{Remark}
\newtheorem{ex}[ex]{Example}
\newtheorem*{fact*}{Fact}
\newtheorem*{rem*}{Remark}
\newtheorem*{ex*}{Example}
\def\textsectionN~{\textsection{}}
\renewcommand\phi{\varphi}
\renewcommand\epsilon{\varepsilon}
\renewcommand\leq{\leqslant}
\renewcommand\geq{\geqslant}
\newcommand{\Set}[1]{\left\{ #1 \right\}}
\newcommand{\set}{  \@ifstar{\@setstar}{\@set}}\newcommand{\@setstar}[2]{\{\, #1 \mid #2 \,\}}
\newcommand{\@set}[1]{\{ #1 \}}
\newcommand{\lin}[1]{\langle #1 \rangle}
\newcommand{\trans}[1][1]{\raisebox{#1ex}{\scriptsize\kern0.1em$t$\kern-0.1em}}
\newcommand{\PP}{\mathbb{P}}
\newcommand{\cP}{{\PP_{\!\! *}}}
\newcommand{\TT}{\mathbb{T}}
\newcommand{\PN}{\PP^N}
\newcommand{\A}{\mathbb{A}}
\newcommand{\sO}{\mathscr{O}}
\newcommand{\ZZ}{\mathbb{Z}}
\newcommand{\RR}{\mathbb{R}}
\newcommand{\textgene}[1]{\ \ \text{#1}\ \,}
\newcommand{\textand}{\textgene{and}}
\DeclareMathOperator{\rk}{rk}
\DeclareMathOperator{\im}{im}
\DeclareMathOperator{\Spec}{Spec}
\newcommand{\Gr}{\mathbb{G}}
\newcommand{\tdiff}[2]{{\partial #1}/{\partial #2}}
\newcommand{\diff}{\tdiff}
\newcommand{\KK}{\Bbbk}
\newcommand{\kk}{\Bbbk}
\newcommand{\Kx}{\KK^{\times}}
\newcommand{\tNd}{\tilde{N}'}
\newcommand{\RNj}[1][\tNd]{0 \leq j \leq #1}
\DeclareMathOperator{\rank}{rk}
\DeclareMathOperator{\chara}{char}
\DeclareMathOperator{\Aff}{Aff}
\def\N{\mathbb{N}}
\def\Z{\mathbb{Z}}
\def\Q{\mathbb{Q}}
\def\R{\mathbb{R}}
\def\C{\mathbb{C}}
\def\A{\mathbb{A}}
\def\r+{\mathbb{R}_{\geq 0}}
\def\r+{{\R}_{\geq 0}}
\def\q+{{\Q}_{\geq 0}}
\def\arw{\rightarrow}
\def\*c{\C^{\times}}
\def\A{\mathbb {A}}
\def\C{\mathbb {C}}
\def\G{\mathbb {G}}
\def\N{\mathbb {N}}
\def\Q{\mathbb {Q}}
\def\R{\mathbb {R}}
\def\Z{\mathbb {Z}}
\title[Gauss maps of toric varieties]{Gauss maps of toric varieties}
\author[K.~Furukawa]{Katsuhisa~FURUKAWA}
\address{
  Department of Mathematics,
  School of Fundamental Science and Engineering,
  Waseda~University,
  3-4-1 Ohkubo, Shinjuku, Tokyo 169-8555, Japan
}
\email{katu@toki.waseda.jp}
\author[A.~Ito]{Atsushi~Ito}
\address{Graduate School of Mathematical Sciences, 
The University of Tokyo, 3-8-1 Komaba, 
Meguro, Tokyo 153-8914, Japan}
\email{itoatsu@ms.u-tokyo.ac.jp}
\subjclass[2010]{14N05, 14M25}
\keywords{Gauss map, Toric variety, Cayley sum}
\begin{document}

\maketitle

\begin{abstract}
  We investigate Gauss maps of (not necessarily normal) projective toric varieties over an algebraically closed field of arbitrary characteristic.
  The main results are as follows:
  (1) The structure of the Gauss map of a toric variety is described in terms of combinatorics in any characteristic.
  (2) We give a developability criterion in the toric case. 
  In particular, 
  we show that
  any toric variety whose Gauss map is degenerate must be the join of some toric varieties in characteristic zero.
  (3) As applications, we provide two constructions of toric varieties whose Gauss maps have some given data (e.g., fibers, images) in positive characteristic.
\end{abstract}

\section{Introduction}
\label{sec:introduction}

Let $X \subset \PN$ be an $n$-dimensional projective variety over an algebraically closed field $\Bbbk$ of arbitrary characteristic.
The \emph{Gauss map} $\gamma$ of $X$
is defined as a rational map
\[
\gamma: X \dashrightarrow \Gr(n, \PN),
\]
which sends each smooth point $x \in X$ to the embedded tangent space $\TT_xX$ of $X$ at $x$ in $\PN$.
The Gauss map is a classical subject and has been studied by many authors.
For example, it is well known
that a general fiber of the Gauss map $\gamma$
is (an open subset of) a linear subvariety of $\PN$ in characteristic zero
(P.~Griffiths and J.~Harris \cite[(2.10)]{GH},
F.~L.~Zak \cite[I, 2.3.\ Theorem (c)]{Zak}).
The linearity of general fibers of $\gamma$ also holds
in arbitrary characteristic
if $\gamma$ is {separable} \cite[Theorem 1.1]{expshr}.
We denote by $\delta_{\gamma}(X)$ the dimension of a general fiber of $\gamma$,
and call it the \emph{Gauss defect} of $X$ (see \cite[2.3.4]{FP}).
The Gauss map $\gamma$ is said to be \emph{degenerate} if $\delta_{\gamma}(X) > 0$.

\vspace{2mm}
In this paper, we investigate the Gauss map of \emph{toric} $X \subset \PN$;
more precisely, we consider a (not necessarily normal)
toric variety $X \subset \PN$ such that the action of the torus on $X$ extends to the whole space $\PN$.
It is known that such $X$ is projectively equivalent to a projective toric variety $X_A$ 
associated to a finite subset $A$ of a free abelian group $M$ (see \cite[Ch.~5, Proposition 1.5]{GKZ}).
The construction of $X_A$ is as follows.

Let $M$ be a free abelian group of rank $n$
and let $\kk [M] = \bigoplus_{u \in M} \kk z^u$ be the group ring of $M$ over $\kk$.
We denote by $T_M$ the algebraic torus $\Spec \kk[M]$.
For a finite subset $A= \{u_0, \ldots, u_N\} \subset M$,
we define the toric variety $X_A$ to be the closure of the image of the morphism
\begin{equation}\label{eq:defphiA}
  \varphi_A : T_M  \arw \PP^N \ : \ t \mapsto [z^{u_0} (t) : \cdots: z^{u_N}(t) ].
\end{equation}
We set $  \langle  A - A  \rangle \subset M $ (resp.\ $ \langle  A - A  \rangle_{\kk} \subset M_{\kk}:= M \otimes_{\Z} \kk $)
to be
the subgroup of $M$ (reap.\ the $\kk$-vector subspace of $ M_{\kk}$) generated by $A -A : = \{ u-u' \, | \, u,u' \in A \} $.
The algebraic torus $T_{ \langle  A - A  \rangle}$ acts on $X_A$,
and $T_{ \langle  A - A  \rangle}$ is contained in $X_A$ as an open dense orbit.
In this paper, for short, we say a projective variety $X \subset \PN$ is \emph{projectively embedded toric}
if $X$ is projectively equivalent to the projective toric variety $X_A$ associated to some finite subset $A$ of a free abelian group $M$ of finite rank.

We denote by $\Aff(A)$ (resp.\ $\Aff_{\kk}(A)$)
the affine sublattice of $M$ (resp.\ the $\kk$-affine subspace of $M_{\kk}$)
spanned by $A$.
In other words,
$\Aff(A)$ (resp.\ $\Aff_{\kk}(A)$) is
the set of linear combinations $\sum_i a_i u_i \in M$ with $a_i \in \ZZ$ (resp.\ $a_i \in \kk$),
$\sum_i a_i = 1$, and $u_i \in A$. We say that $A$ \emph{spans} the affine lattice $M$
(resp.\ the $\kk$-affine space $M_{\kk}$)
if $\Aff(A) = M$ (resp.\ $\Aff_{\kk}(A) = M_{\kk}$).

\vspace{1em}

Projective geometry of toric $X_A$ has been investigated in view of the projective dual
in many papers (\cite{CD}, 
\cite{DDP}, \cite{DN}, \cite{DR}, \cite{GKZ}, \cite{MT}, etc.).
On the other hand, the Gauss map of $X_A$ has not been well studied yet.
In the following result, we describe the structure of Gauss maps of toric varieties
in terms of combinatorics.

\begin{thm}\label{thm_structure}
  Let $\kk$ be an algebraically closed field of arbitrary characteristic, and
  let $M$ be a free abelian group of rank $n$.
  For a finite subset $A= \{u_0, \ldots, u_N\}  \subset M$ which spans the affine lattice $M$,
  set
  \begin{gather*}
    B := \{ u_{i_0} + u_{i_1} + \cdots + u_{i_n} \in M  \, |  \,  u_{i_0}, u_{i_1} , \ldots , u_{i_n}  \text{ span the $\kk$-affine space } M_{\KK} \}
  \end{gather*}
  and let
  $\pi: M \arw M':= M / (\langle B-B \rangle_{\R} \cap M)$ be the natural projection.
  Let $\gamma: X_A \dashrightarrow \G(n, \PP^N)$ be the Gauss map of the toric variety $X_A \subset \PN$.
  Then the following hold.
  \begin{enumerate}  \item\label{thm:item-image}
    The closure $\overline{\gamma(X_A)}$ of the image of $\gamma$,
    which is embedded in a projective space by the Pl\"ucker embedding of $ \Gr (n, \PP^{N})$,
    is projectively equivalent to the toric variety $ X_{B} $.
  \item\label{thm:item-map}
    The restriction of $\gamma : X_A \dashrightarrow \overline{\gamma (X_A)} \cong X_{B}$ on $T_M \subset X_A$
    is the morphism
    \[
    T_M = \Spec \KK[M] \twoheadrightarrow T_{ \langle  B-B  \rangle} = \Spec \KK[ \langle  B-B  \rangle] \subset X_{B}
    \]
    induced by the inclusion $ \langle  B-B \rangle \subset M$.
  \item\label{thm:item-fiber}
    Let $F \subset T_M$ be an irreducible component of any fiber of $\gamma|_{T_M}$ with the reduced structure.
    Let $T_{M'} \hookrightarrow T_M$ be the subtorus induced by $\pi$.
    Then $F$ is a translation of $T_{M'}$ by an element of $T_M$,
    and 
    the closure $\overline{F} \subset X_A$
    is projectively equivalent to the toric variety $X_{\pi(A)}$.

  \end{enumerate}
  In particular,
  we have $ \delta_{\gamma}(X_{A}) = \rk M' = n -\rank  \langle  B-B  \rangle$. 
\end{thm}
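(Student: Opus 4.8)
The plan is to compute the Gauss map explicitly on the open torus $T_M$ in Pl\"ucker coordinates and to read off all three assertions, together with the defect formula, from that single computation. Fix a basis of $M$, so each $u_i$ acquires integer coordinates $(a_{i,1},\dots,a_{i,n})$ and $z^{u_i}=t_1^{a_{i,1}}\cdots t_n^{a_{i,n}}$ on $T_M\cong(\kk^\times)^n$. At a point $\varphi_A(t)$, $t\in T_M$, the embedded tangent space $\TT_{\varphi_A(t)}X_A$ is spanned by $\varphi_A(t)$ and its $n$ logarithmic derivatives $t_k\,\partial/\partial t_k$; that is, it is the row span of the $(n+1)\times(N+1)$ matrix with $0$-th row $(z^{u_i}(t))_i$ and $k$-th row $(a_{i,k}z^{u_i}(t))_i$. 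Since $A$ spans the affine lattice $M$, this matrix has full rank $n+1$ for general $t$, so $\gamma$ is defined there. I stress that this description is valid in every characteristic and needs no separability hypothesis, because the embedded tangent space is always the linear span of the first-order data.

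First I would evaluate the Pl\"ucker coordinate $p_S$ attached to an $(n+1)$-subset $S=\{i_0<\dots<i_n\}$. Pulling $z^{u_{i_j}}(t)$ out of the $j$-th column yields
\[
p_S(t)=c_S\cdot z^{\sigma(S)}(t),\qquad
\sigma(S):=u_{i_0}+\cdots+u_{i_n},\qquad
c_S:=\det\begin{pmatrix}1&\cdots&1\\ a_{i_0,1}&\cdots&a_{i_n,1}\\ \vdots& &\vdots\\ a_{i_0,n}&\cdots&a_{i_n,n}\end{pmatrix}.
\]
The scalar $c_S\in\kk$ is nonzero exactly when $u_{i_0},\dots,u_{i_n}$ span the $\kk$-affine space $M_\kk$, which is precisely the condition $\sigma(S)\in B$; it is here that working with $M_\kk$ rather than $M_\QQ$ is essential, since an integer volume can vanish modulo $\chara\kk$. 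Hence the nonvanishing Pl\"ucker coordinates are indexed by the $S$ with $\sigma(S)\in B$. Grouping them by the value $b=\sigma(S)$, the linear map $\Psi$ carrying the $b$-th coordinate to $\sum_{S:\sigma(S)=b}c_S\,e_S$ is injective, because the supports attached to distinct $b$ are disjoint, and one checks that $\Psi\circ\varphi_B$ equals the composite of $\gamma\circ\varphi_A$ with the Pl\"ucker embedding. This realises $\overline{\gamma(X_A)}$ as the projective-linear image of $X_B$, giving \ref{thm:item-image}, and exhibits the map itself as $\varphi_B$, which factors as $T_M\twoheadrightarrow T_{\langle B-B\rangle}\hookrightarrow X_B$ through $\langle B-B\rangle\subset M$, giving \ref{thm:item-map}.

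The fibre statement is then pure torus geometry. The surjection $T_M\twoheadrightarrow T_{\langle B-B\rangle}$ dual to $\langle B-B\rangle\subset M$ has kernel $\Hom(M/\langle B-B\rangle,\kk^\times)$, whose identity component is the subtorus $T_{M'}\hookrightarrow T_M$ induced by $\pi$, with $M'=M/(\langle B-B\rangle_\R\cap M)$ the torsion-free quotient. So each irreducible component $F$ of a fibre of $\gamma|_{T_M}$ is a coset $s\,T_{M'}$ with $s\in T_M$; restricting $\varphi_A$ gives $\varphi_A(st')=[\,z^{u_i}(s)\,z^{\pi(u_i)}(t')\,]_i$ for $t'\in T_{M'}$, using $z^{u_i}|_{T_{M'}}=z^{\pi(u_i)}$, and the nonzero constants $z^{u_i}(s)$ form a diagonal projective transformation, so $\overline F$ is projectively equivalent to the image of $t'\mapsto[z^{\pi(u_i)}(t')]_i$, namely $X_{\pi(A)}$; this is \ref{thm:item-fiber}. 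Combining the three parts, $\delta_\gamma(X_A)=\dim F=\rk M'=n-\rk\langle B-B\rangle$, the last equality holding because $\langle B-B\rangle_\R\cap M$ is the saturation of $\langle B-B\rangle$ and so shares its rank. The step that demands the most care---the main obstacle---is the identification in \ref{thm:item-image}: distinct subsets $S$ may share the same sum $\sigma(S)\in B$, so one must verify that the collapsing map $\Psi$ is a genuine linear isomorphism onto its span and leaves the projective class unchanged. The disjoint-support observation settles this, but it is the point where the combinatorics of $B$ as a set, rather than a multiset of simplex-sums, has to be handled honestly.
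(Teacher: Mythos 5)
Your proposal is correct and follows essentially the same route as the paper: the explicit matrix for $\TT_{\varphi_A(t)}X_A$, the Pl\"ucker coordinate computation $p_S = c_S z^{\sigma(S)}$, the linear embedding collapsing coordinates with equal simplex-sums (the paper's map \ref{eq:iota-d-hookrar}, where injectivity likewise rests on the disjointness of supports), and the identification of fiber components with cosets of the identity component $T_{M'}$ of the kernel of $T_M \twoheadrightarrow T_{\langle B-B\rangle}$ (which the paper extracts via the elementary divisor theorem in \autoref{lem_rk&deg} and \autoref{lem_lattice_hom}). No gaps.
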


Note that there is no loss of generality in assuming that $A $ spans the affine lattice $M$ in the above theorem (see \autoref{thm:span-A-M}).
We also study when the Gauss map of toric $X_A$ is degenerate (i.e., $\rank  \langle  B-B \rangle < n$),
and give a developability criterion for covering families of $X_A$
(see \autoref{sec:criterion-degeneracy}). 

\vspace{2mm}


In the following example, we illustrate the description in \autoref{thm_structure}.

\begin{ex}\label{ex_intro}
  Let $M=\Z^2$ and
  \[
  A = \left\{
    \begin{bmatrix}
      0 \\ 0
    \end{bmatrix},
    \begin{bmatrix}
      0 \\ 1
    \end{bmatrix},
    \begin{bmatrix}
      1 \\ -1
    \end{bmatrix}
    ,\begin{bmatrix}
      -1 \\ -1
    \end{bmatrix}
  \right\} \subset \Z^2,
  \]
  where we write elements in $\Z^2$ by column vectors.
  We consider the Gauss map $\gamma$ of the toric surface $X_A \subset \PP^3$.
  When $\chara \kk \neq 2$,
  \[
  B = \left\{
    \begin{bmatrix}
      0 \\ -1
    \end{bmatrix},
    \begin{bmatrix}
      0 \\ -2
    \end{bmatrix},
    \begin{bmatrix}
      -1 \\ 0
    \end{bmatrix}
    ,\begin{bmatrix}
      1 \\ 0
    \end{bmatrix}
  \right\} \subset \Z^2.
  \]
  Hence $ \langle B-B \rangle =M =\Z^2$, and $\gamma$ is birational due to \ref{thm:item-map} in \autoref{thm_structure}.
  On the other hand, when $\chara \kk =2$,
  \begin{align*}
    B &= \left\{
      \begin{bmatrix}
        -1 \\ 0
      \end{bmatrix}
      ,\begin{bmatrix}
        1 \\ 0
      \end{bmatrix}
    \right\},\
    \lin{B-B} = \left\langle
      \begin{bmatrix}
        2
        \\
        0
      \end{bmatrix}
    \right\rangle,\
    \lin{B-B}_{\RR} \cap M
    = \left\langle
      \begin{bmatrix}
        1 \\ 0
      \end{bmatrix}
    \right\rangle
    \subset \Z^2,
  \end{align*}
  and $\pi(A) = \{0, 1,-1\} \subset \Z^2 / (\langle B-B \rangle_{\R} \cap M) = \Z^1$.
  Thus \ref{thm:item-image} implies that $\overline{\gamma(X_A)} \cong X_{B} =\PP^1$,
  and \ref{thm:item-map} implies that
  $\gamma|_{T_M}: T_M= (\Kx)^2 \twoheadrightarrow T_{\lin{B-B}}=\Kx$ is given by $(z_1,z_2) \mapsto z_1^2$.
  From \ref{thm:item-fiber}, 
  a general fiber of $\gamma$ with the reduced structure is projectively equivalent to the smooth conic $X_{\pi(A)}$.
  \begin{figure}[htbp]
    \[
    \begin{xy}
      (10,0)="A",(0,10)="B",
      (-10,-10)="C",
      (9.9,0.1)="E",(-5,0.1)="F",
      (9.8,-0.1)="I",(-5,-0.1)="J",
      (-20,0)="1",(20,0)="2",
      (0,-17)="3",(0,18)="4",
      (50,-17)="7",(50,18)="8",
      (0,10)*{\bullet},(0,0)*{\bullet},
      (-10,-10)*{\bullet},(10,-10)*{\bullet},
      (-10,0)*{\times},(10,0)*{\times},
      (50,10)*{\sqbullet},(50,0)*{\sqbullet},
      (50,-10)*{\sqbullet},
      (53,10)*{1},(53,0)*{0},(55,-10)*{-1},
      (16,18)*{\bullet\;A}, 
      (16,13)*{\times\;B},
      (65,16)*{\sqbullet\;\pi(A)},
      (35,3)*{\pi}
      \ar "1";"2"
      \ar "3";"4"
      \ar "7";"8"
      \ar (27,0);(43,0)
    \end{xy}
    \]
    \caption{$\chara \kk =2$.}
    \label{figure1}
  \end{figure}
\end{ex}

In characteristic zero, it is well known that, if a projective variety
is the join of some varieties,
then its Gauss map is degenerate due to Terracini's lemma (see \cite[2.2.5]{FP}, \cite[Ch.\,II, 1.10.\,Proposition]{Zak}).
For toric varieties in characteristic zero, this is the only case when the Gauss map is degenerate;
more precisely, we have:

\begin{cor}\label{cor_join}
  Let $X \subset \PP^N$ be a projectively embedded toric variety in $\chara\kk = 0$.
  Then there exist disjoint torus invariant closed subvarieties $ X_0, \ldots,X_{\delta_{\gamma}(X)} \subset X$
  such that $X$ is the join of  $ X_0, \ldots,X_{\delta_{\gamma}(X)}$.
\end{cor}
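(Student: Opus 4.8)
The plan is to reduce everything to \autoref{thm_structure} and then read the join decomposition directly off the projection $\pi$. By \autoref{thm:span-A-M} I may assume $X = X_A$ for a finite set $A = \{u_0, \dots, u_N\} \subset M$ spanning the affine lattice $M$, with $\rk M = n$; write $\delta := \delta_{\gamma}(X)$. By \autoref{thm_structure} we have $\delta = \rk M'$ and a general fiber of $\gamma$ is projectively equivalent to $X_{\pi(A)}$. Since $\chara \kk = 0$, a general fiber of a Gauss map is a linear subspace (\cite[(2.10)]{GH}, \cite[I, 2.3.\ Theorem (c)]{Zak}), so $X_{\pi(A)}$ is a linearly embedded $\PP^{\delta}$. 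As $\pi(A)$ spans the $\delta$-dimensional $\kk$-affine space $M'_{\kk}$, this forces $\pi(A)$ to be supported on exactly $\delta+1$ affinely independent points $\bar w_0, \dots, \bar w_{\delta} \in M'$. I then set $A_k := \pi^{-1}(\bar w_k) \cap A$ and $X_k := X_{A_k}$, and my candidate decomposition is the join $X = X_0 * \cdots * X_{\delta}$.

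First I would check that these are genuine disjoint torus-invariant pieces. Each $\bar w_k$ is a vertex of $\Conv \pi(A) = \pi(\Conv A)$, so $F_k := \pi^{-1}(\bar w_k) \cap \Conv A$ is a face of $\Conv A$ with $A_k = A \cap F_k$; hence each $X_k$ is a torus-invariant closed subvariety of $X_A$. Because $A = \bigsqcup_k A_k$, the $X_k$ lie in complementary coordinate subspaces $L_0, \dots, L_{\delta}$ of $\PN$ and in particular are pairwise disjoint. Next I would show $X_A$ is contained in the join $J := X_0 * \cdots * X_{\delta}$: using \autoref{thm_structure}\,\ref{thm:item-fiber}, for $t \in T_M$ and $g \in T_{M'}$ one has $z^{u_i}(tg) = z^{\bar w_k}(g)\, z^{u_i}(t)$ whenever $u_i \in A_k$, so $\varphi_A(tg) = \sum_k z^{\bar w_k}(g)\, \varphi_{A_k}(t)$ lies in the linear span of the points $\varphi_{A_k}(t) \in X_k$; as $tg$ ranges over $T_M$ this gives $\varphi_A(T_M) \subset J$ and hence $X_A \subseteq J$. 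Since the $L_k$ are independent, the join is honest, so $\dim J = \sum_k \dim X_k + \delta$ and $J$ is irreducible; it then suffices to prove $\sum_k \dim X_k = n - \delta$, for then $\dim J = n = \dim X_A$ and $X_A = J$.

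The hard part is exactly this dimension count, equivalently the assertion that the internal direction spaces $\lin{A_k - A_k}_{\kk}$ are in direct sum. I plan to prove it by an exchange argument on affine bases. First observe that every affinely independent $S \subset A$ spanning $M_{\kk}$ meets each block in a number $m_k := \abs{S \cap A_k}$ that is \emph{independent of $S$}: for two such sets $S, S'$ one has $\sum S - \sum S' \in \lin{B-B} \subseteq \ker \pi$, whence $\sum_k (m_k - m_k')\,\bar w_k = 0$ with $\sum_k (m_k - m_k') = 0$, and affine independence of the $\bar w_k$ forces $m_k = m_k'$. Since $\abs{S} = n+1$ this gives $\sum_k m_k = n+1$, and plainly $m_k \leq \dim \Aff_{\kk}(A_k) + 1$. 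The crux is the reverse inequality, i.e.\ that $S \cap A_k$ already affinely spans $A_k$. Suppose instead some $u^{*} \in A_{k_0}$ lies outside $\Aff_{\kk}(S \cap A_{k_0})$; note $u^{*} \notin S$, so $S \cup \{u^{*}\}$ has $n+2$ points of affine rank $n$ and thus carries a unique affine circuit. Writing its relation $u^{*} = \sum_{v \in S} c_v v$ and applying $\pi$, affine independence of the $\bar w_k$ gives $\sum_{v \in S \cap A_j} c_v = 1$ for $j = k_0$ and $=0$ otherwise; since $u^{*} \notin \Aff_{\kk}(S \cap A_{k_0})$ the contribution from the blocks $j \neq k_0$ is nonzero, so the circuit meets some $A_{j^{*}}$, $j^{*} \neq k_0$, in a point $v^{*}$. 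Then $\tilde S := (S \cup \{u^{*}\}) \setminus \{v^{*}\}$ is affinely independent and spans $M_\kk$, yet $\abs{\tilde S \cap A_{k_0}} = m_{k_0}+1$, contradicting the $S$-independence of the $m_k$. Hence $m_k = \dim \Aff_{\kk}(A_k)+1$ for all $k$, so $\sum_k \dim X_k = \sum_k (m_k - 1) = (n+1) - (\delta+1) = n - \delta$, as needed. (The case $\delta = 0$ is the trivial join $X = X_0$.)
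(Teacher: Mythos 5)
Your proposal is correct, and its skeleton is the same as the paper's: decompose $A$ into the fibers $A_k=\pi^{-1}(\bar w_k)\cap A$, show the $X_{A_k}$ are disjoint torus-invariant subvarieties sitting in complementary coordinate subspaces whose join contains $X_A$, and finish with the dimension count $\sum_k\dim\Aff_\kk(A_k)=n-\delta$. The paper packages exactly these steps as \autoref{thm:subvar-in-join} (the Cayley-sum structure of $A$, valid for any separable Gauss map), \autoref{lem_torus_inv_sub} (join containment plus the codimension formula $\sum_j(\rank\Aff(A^j)-\dim\Aff_\kk(A^j))$), and the observation that this codimension vanishes in characteristic zero. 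Two of your ingredients are genuinely re-derived rather than identical. First, you get $\#\pi(A)=\delta+1$ from the classical linearity of Gauss fibers in characteristic zero (\cite{GH}, \cite{Zak}) together with the nondegeneracy of $X_{\pi(A)}$; the paper instead derives this internally from its developability criterion (\autoref{cor_fiber_separable}, via \autoref{main_thm}), which also covers the separable positive-characteristic case your shortcut would not. Second, your proof that $m_k=\dim\Aff_\kk(A_k)+1$ via constancy of the block counts and a basis-exchange contradiction is essentially a repackaging of the exchange argument in \autoref{thm:AffAj-eq} (where $c_{i_k}\neq 0$ yields $u-u_{i_k}\in B-B\subseteq\ker\pi$), though your version is self-contained and arguably cleaner for the special case $\tilde\pi=\pi$. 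All the remaining steps (affine independence of the $\bar w_k$, the honest-join dimension formula, and $\rank\Aff(A_k)=\dim\Aff_\kk(A_k)$ in characteristic zero) check out.
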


If the Gauss map of $X_A \subset \PN$ is separable,
$A$ is written as a \emph{Cayley sum} of certain finite subsets $A^0, \dots, A^{\delta_{\gamma}(X)}$ in any characteristic
(see \autoref{thm:subvar-in-join} for details).
However, the statement of \autoref{cor_join} does \emph{not} hold in general in positive characteristic,
even if the Gauss map is separable
(see \autoref{thm:sep-gamma-X-not-join}).

\vspace{1em}

Next, let us focus on inseparable Gauss maps.
A.~H.~Wallace \cite[\textsection 7]{Wallace} showed that
the Gauss map $\gamma$ of a projective variety can be \emph{inseparable}
in positive characteristic.
In this case,
it is possible that
a general fiber of $\gamma$ is \emph{not} a linear subvariety of $\PN$;
the fiber can be a union of points
(H.~Kaji \cite[Example 4.1]{Kaji1986} \cite{Kaji1989}, J.~Rathmann \cite[Example~2.13]{Rathmann}, A.~Noma \cite{Noma2001}),
and can be a non-linear variety
(S.~Fukasawa \cite[\textsection{}7]{Fukasawa2005}).
In fact, Fukasawa \cite{Fukasawa2006}
showed that \emph{any} projective variety appears as a general fiber of the Gauss map of some projective variety.

As we will see in \autoref{cor_rk&deg},
\autoref{thm_structure} provides
several computations on the Gauss map $\gamma$ of toric varieties
(e.g., the rank, separable degree, inseparable degree). 
We also obtain the toric version of Fukasawa's result \cite{Fukasawa2006} as follows:

\begin{thm}[Special case of \autoref{cor_im&fiber}]\label{thm_Fukasawa1}

  Assume $\chara \kk >0$.
  Let $Y \subset \PP^{N'} $ and $Z \subset \PP^{N''}$ be projectively embedded toric varieties.
  If $n:=\dim(Y)+\dim(Z)$ is greater than or equal to $N'$, then
  there exists an $n$-dimensional projectively embedded toric variety
  $X \subset \PP^{n+N''}$
  satisfying the following conditions:
  \begin{enumerate}[\normalfont (i)]
  \item
    (The closure of) a general fiber of the Gauss map $\gamma$ of $X$ with the reduced structure is projectively equivalent to $Y$.
  \item 
    (The closure of) the image of $\gamma$     is projectively equivalent to $Z$.
  \end{enumerate}
\end{thm}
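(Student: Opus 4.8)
The plan is to use \autoref{thm_structure} to convert the geometric requirements (i) and (ii) into a purely combinatorial existence problem, and then to write down an explicit point configuration solving it. Write $Y \cong X_{A_Y}$ and $Z \cong X_{A_Z}$ with finite sets $A_Y, A_Z$ spanning affine lattices of ranks $d_Y = \dim Y$ and $d_Z = \dim Z$; assuming (as we may) that $Y$ is nondegenerate in $\PP^{N'}$, we have $|A_Y| = N'+1$ and $|A_Z| = N''+1$. By \ref{thm:item-image} and \ref{thm:item-fiber} of \autoref{thm_structure}, it suffices to produce a finite set $A$ spanning an affine lattice $M$ of rank $n = d_Y + d_Z$ with $|A| = n + N'' + 1$, whose associated set $B$ is projectively equivalent to $A_Z$ and whose projection $\pi(A)$ is projectively equivalent to $A_Y$. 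Then $X := X_A \subset \PP^{n+N''}$ satisfies (i) and (ii), and the identity $\delta_{\gamma}(X_A) = \rk M' = d_Y$ comes for free.

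First I would record the two pieces of flexibility available when $\chara\kk = p > 0$: for any finite $A'$ and any $t$ in the ambient lattice one has $X_{A'+t} = X_{A'}$ and $X_{pA'} = X_{A'}$ as subvarieties of projective space, the latter because the $p$-power map on the torus is a bijection on points. Thus I am free to replace $A_Z$ by $c + pA_Z$ for a convenient $c$, and this replacement is exactly what makes the construction possible in positive characteristic.

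Then I would construct $A$ inside $M = M'' \oplus M'$, where $M''$ (rank $d_Z$) is the \emph{image direction} and $M'$ (rank $d_Y$, identified with the affine lattice of $A_Y$) is the \emph{fibre direction}, with $\pi$ the projection onto $M'$. Fix an affine basis $v_0,\dots,v_{d_Y}$ of $A_Y$ and list the remaining ``extra'' points $v_{d_Y+1},\dots,v_{N'}$; note that $r := N' - d_Y \le d_Z$ precisely because $n \ge N'$. Place the affine basis at image level $0$, namely $(0,v_0),\dots,(0,v_{d_Y})$; spread one extra point $v_{N'}$ over a $p$-scaled, translated copy of $A_Z$, i.e.\ take the points $(c_k, v_{N'})$ with $c_k = c + p\,w_k$ where $A_Z = \{w_0,\dots,w_{N''}\}$; and adjoin the remaining extra points of $A_Y$ together with enough ``filler'' points, chosen deep enough in $p$-divisibility along the image direction, to reach the required cardinality and to make $A$ span a rank-$n$ affine lattice. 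By design $\pi(A) = A_Y$ on the nose, so \ref{thm:item-fiber} yields fibre $X_{\pi(A)} = X_{A_Y} \cong Y$. The role of the factor $p$ is that any two copies $(c_k,\cdot),(c_l,\cdot)$ of the same fibre point differ by $p(w_k - w_l) \in pM''$, so every $(n+1)$-subset containing two such copies has simplex index divisible by $p$ and hence does \emph{not} span $M_{\KK}$. The surviving spanning subsets are the ``near-transversals'': their image-coordinate sums run exactly through $c + pA_Z$, while their $\pi$-value stays at a constant $s_0 := v_0 + \dots + v_{d_Y} + v_{N'}$. Hence $B = \{(c_k, s_0) : 0 \le k \le N''\}$, so that $X_B = X_{A_Z} \cong Z$ by the flexibility above, and $\langle B-B\rangle_{\RR} \cap M = M''$ has rank $d_Z$, as required.

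The main obstacle is the exact determination of $B$, that is, the volume-mod-$p$ bookkeeping: I must prove that the \emph{only} $(n+1)$-subsets of $A$ with index coprime to $p$ are the intended near-transversals -- no spurious subset may span -- and that the sums of the spanning subsets close up to a single affine copy of $A_Z$ with constant $\pi$-component, rather than to some larger set. This is precisely where the hypothesis $n \ge N'$ enters: it guarantees enough room in the $d_Z$ image directions to separate the $N'+1$ points of $A_Y$ by $p$-divisible gaps and to carry all of $A_Z$ simultaneously, and it forces the filler points to exist without ever participating in a spanning subset. Once this combinatorial lemma is in place, parts \ref{thm:item-image}, \ref{thm:item-map}, and \ref{thm:item-fiber} of \autoref{thm_structure} deliver conclusions (i) and (ii) together with the defect formula at once.
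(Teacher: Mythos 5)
Your overall strategy is the paper's: reduce via \autoref{thm_structure} to producing $A$ with $\pi(A)=A_Y$ and with $B$ a translate of $p\cdot A_Z$, using $X_{pA_Z}=X_{A_Z}$ and the fact that points congruent modulo $p$ collapse in $M_{\KK}$, so that at most one point of the spread copy of $A_Z$ can enter a spanning $(n+1)$-subset. The gap is in the point configuration itself, and it is more serious than the ``bookkeeping'' you flag at the end: whenever $n>N'$ your set $A$ does not affinely span $M_{\KK}$ (nor even the lattice $M$), so no $(n+1)$-subset spans, $B$ is empty, and $X_A$ is not even $n$-dimensional. Count the image-direction content of $A$ modulo $p$: the $d_Y+1$ points carrying the affine basis of $A_Y$ sit at image level $0$; the $N''+1$ spread points all have image component congruent to $c$ mod $p$, so among themselves and against level $0$ they contribute a single direction; the $r-1$ remaining extra points contribute at most $r-1$ further directions; and your fillers, being $p$-divisible in the image direction, contribute nothing --- that is exactly the price of forcing them never to enter a spanning subset. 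Hence the differences of elements of $A$ span at most an $r$-dimensional subspace of $M''_{\KK}\cong\KK^{d_Z}$, whereas spanning $M_{\KK}$ requires all $d_Z$ image directions. You correctly observe that $r=N'-d_Y\leq d_Z$ ``precisely because $n\geq N'$,'' but the construction needs the opposite inequality $r\geq d_Z$, i.e.\ $N'\geq n$; so your $A$ works only in the boundary case $n=N'$. A concrete failure: $Y=\PP^1\subset\PP^1$, $Z=\PP^2\subset\PP^2$, $n=3$.

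The paper avoids this by not splitting the roles of ``fiber-carrying'' and ``image-carrying'' points. It takes $M=\Z^n$ with the entire standard basis $e_1,\dots,e_n$ inside $A$, defines $\pi$ by $e_i\mapsto u_i'$ for $i\leq N'$ and $e_i\mapsto 0$ for $i>N'$, and places $p\cdot A''$ inside $\ker\pi$ (with cokernel of the chosen copy of $M''$ in $\ker\pi$ controlling the component count $c$). Every spanning subset is then forced to be $\{pf_s,e_1,\dots,e_n\}$ for a single $s$: the surplus basis vectors $e_{N'+1},\dots,e_n$ lie in $\ker\pi$ and participate in \emph{every} spanning subset --- so the $\pi$-image of each sum is still constant, which is all the fiber computation requires --- while simultaneously supplying, together with $e_1,\dots,e_{N'}$, the full complement of image directions that your $p$-divisible fillers cannot. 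If you want to salvage your construction, the fix is exactly this: make the fillers essential to every spanning subset rather than excluded from all of them.
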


By \autoref{thm_Fukasawa1}, any projectively embedded toric variety appears
as a general fiber and the image of the Gauss map of
a certain projectively embedded \emph{toric} variety; moreover
we can also control the \emph{rank} of $\gamma$,
and the \emph{number} of the irreducible components of a general fiber of $\gamma$
(see \autoref{sec:posit-char-case}, for details).

\vspace{2mm}
This paper is organized as follows.
In \autoref{sec:prel-toric-vari}, we recall some basic properties of toric varieties.
In \autoref{sec:structure-gauss-maps}, we describe the structure of the Gauss maps
of toric varieties in a combinatorial way, and prove \autoref{thm_structure}.
In \autoref{sec:degen-gauss-maps}, we investigate when the Gauss maps are degenerate,
and give a developability criterion.
As a result, we show \autoref{cor_join}.
In \autoref{sec:posit-char-case}, we present two constructions of projectively embedded toric varieties, yielding \autoref{thm_Fukasawa1}.

\subsection*{Acknowledgments}

The authors would like to express their gratitude to Professors
Satoru Fukasawa and Hajime Kaji for their valuable comments and advice.
In particular, \autoref{thm:a-vari-kaji} is due to Professor Kaji.
The first author was partially supported by JSPS KAKENHI Grant Number 25800030.
The second author was partially supported by JSPS KAKENHI Grant Number 25887010.

\section{Preliminary on toric varieties}
\label{sec:prel-toric-vari}

Two projective varieties $X_1 \subset \PP^{N_1}$ and $X_2 \subset \PP^{N_2}$ are said to be
\emph{projectively equivalent} if
there exist linear embeddings $\jmath_i: \PP^{N_i} \hookrightarrow \PN$
(i.e., $\jmath_i^*(\sO_{\PN}(1)) = \sO_{\PP^{N_i}}(1)$)
such that $\jmath_1(X_1) = \jmath_2(X_2)$.
(Indeed, we can take $N = \max\set{N_1,N_2}$.)

\vspace{2mm}
The following two lemmas about toric varieties are well known,
but we prove them for the convenience of the reader.

\begin{lem}\label{lem_lattice_hom}
  Let $\pi : M \arw M'$ be a surjective homomorphism between free abelian groups of finite ranks.
  Let $\iota : T_{M'} \hookrightarrow T_M$ be the embedding induced by $\pi$.
  For a finite set $A \subset M$ with $\Aff(A)=M$
  the closure of $\iota(T_{M'})$ in $X_A$ is projectively equivalent to $X_{\pi(A)}$.
  The translations
  $\set{t \cdot \overline{\iota(T_{M'})}}_{t \in T_M}$
  of the closure $\overline{\iota(T_{M'})}$
  under the action of $T_M$ on $X_A$ give a covering family of $X_A$, and
  each translation is also projectively equivalent to $X_{\pi(A)}$.
\end{lem}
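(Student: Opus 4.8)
The plan is to compute the restricted parametrization $\varphi_A\circ\iota$ explicitly, to recognize its image as a linear re-embedding of $X_{\pi(A)}$, and then to read off the statements about the translates from the fact that $T_M$ acts on $X_A$ through linear automorphisms of $\PN$. Two reductions come first. Since $\Aff(A)=u_0+\langle A-A\rangle$, the hypothesis $\Aff(A)=M$ is equivalent to $\langle A-A\rangle=M$; hence $\varphi_A$ maps $T_M$ isomorphically onto the dense orbit $T_{\langle A-A\rangle}=T_M$ of $X_A$ (the characters $u_i-u_0$ generate $M$, so the parametrization is a closed immersion in the chart $x_0\neq0$), and I identify $T_M$ with this orbit. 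In particular $\iota(T_{M'})\subset T_M\subset X_A$ is locally closed and its closure in $X_A$ equals its closure in $\PN$. I also record that the $T_M$-action on $X_A$ is the restriction of the diagonal linear action $t\cdot[x_0:\cdots:x_N]=[z^{u_0}(t)x_0:\cdots:z^{u_N}(t)x_N]$ on $\PN$, so each $t\in T_M$ acts as a linear automorphism $\rho_t$ of $\PN$.

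The computation itself is short. Because $\iota$ is induced by the surjection $\pi$, its comorphism sends $z^{u_i}\mapsto z^{\pi(u_i)}$, so for $s\in T_{M'}$ we get $(\varphi_A\circ\iota)(s)=[z^{\pi(u_0)}(s):\cdots:z^{\pi(u_N)}(s)]$. Writing $\pi(A)=\{w_0,\dots,w_{N'}\}$ for the distinct images and letting $j\colon\{0,\dots,N\}\twoheadrightarrow\{0,\dots,N'\}$ be the surjection with $\pi(u_i)=w_{j(i)}$, this says $\varphi_A\circ\iota=\jmath\circ\varphi_{\pi(A)}$, where $\jmath\colon\PP^{N'}\to\PN$, $[y_0:\cdots:y_{N'}]\mapsto[y_{j(0)}:\cdots:y_{j(N)}]$, duplicates coordinates according to $j$. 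Since $j$ is surjective, $\jmath$ has no base points and is injective, so it is a linear embedding with $\jmath^*\sO_{\PN}(1)=\sO_{\PP^{N'}}(1)$. Taking closures (and using that $\jmath$, being a closed immersion, commutes with closure) yields $\overline{\iota(T_{M'})}=\jmath(X_{\pi(A)})$, which is therefore projectively equivalent to $X_{\pi(A)}$; this is the first assertion.

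For the translates, write $t\cdot\overline{\iota(T_{M'})}=\rho_t(\jmath(X_{\pi(A)}))=(\rho_t\circ\jmath)(X_{\pi(A)})$. As $\rho_t\circ\jmath$ is again a linear embedding, each translate is projectively equivalent to $X_{\pi(A)}$. Finally, $\iota(T_{M'})$ is a subtorus of $T_M$, so its translates $t\cdot\iota(T_{M'})$ are exactly its cosets and their union is all of $T_M$; since $T_M$ is dense in $X_A$, the family $\{t\cdot\overline{\iota(T_{M'})}\}_{t\in T_M}$ is a covering family of $X_A$. The union of the \emph{closed} translates need not be all of $X_A$ — already for $M'=0$, $M=\ZZ$, $A=\{0,1\}$ one gets the family of points $[1:t]$ missing the two torus-fixed points of $\PP^1$ — so here a covering family means one whose union is dense.

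I expect the only real obstacle to be the non-injectivity of $\pi$ on $A$: the parametrization $\varphi_A\circ\iota$ lands in the proper linear subspace of $\PN$ cut out by the equalities $x_i=x_{i'}$ with $j(i)=j(i')$, not in a coordinate subspace, so one cannot simply delete coordinates. Introducing the coordinate-duplicating map $\jmath$ and checking that surjectivity of $j$ makes it a genuine linear embedding is exactly what matches the image with $X_{\pi(A)}$, which is built from the \emph{distinct} exponents $w_0,\dots,w_{N'}$; every other step is formal.
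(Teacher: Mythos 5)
Your proposal is correct and follows essentially the same route as the paper: both introduce the coordinate-duplicating linear embedding $\jmath\colon\PP^{N'}\hookrightarrow\PP^N$ determined by $\pi(u_i)=u'_{j(i)}$, observe the commutativity $\varphi_A\circ\iota=\jmath\circ\varphi_{\pi(A)}$, and then use that the $T_M$-action on $X_A$ extends to linear automorphisms of $\PN$ to handle the translates and the covering family. Your added remarks (surjectivity of $j$ making $\jmath$ a genuine linear embedding, and the caveat that the union of the closed translates is only dense) are consistent with, and slightly more explicit than, the paper's argument.
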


\begin{proof}
  Let $A=\{ u_0,\ldots, u_N\}$ and $\pi(A)= \{u'_0, \ldots, u'_{N'}\}$  for $N=\# A-1$ and $N' = \# \pi(A) -1$.
  We define a linear embedding $\jmath : \PP^{N'} \arw \PP^N$ by
  \[
  \jmath ( [X'_0 : \cdots: X'_{N'}]) = [X_0 : \cdots : X_N],
  \]
  where for each $i$, we set $X_i := X'_j$ for $j$ such that $\pi(u_i)=u'_j$. 
  Then we have the following commutative diagram
  \[
  \xymatrix{
    T_{M'} \ar@{^{(}->}[d]^{\iota} \ar[r]^{\varphi_{\pi(A)}} \ar@{}[dr]|\circlearrowleft &  \PP^{N'} \ar@{^{(}->}[d]^{\jmath} \\
    T_M  \ar[r]^{\varphi_{A}} & \PP^N.\\
  }
  \]
  Hence
  $\overline{\iota(T_{M'})}$ is projectively equivalent to $X_{\pi(A)}$.
  Since the action of $T_M$ on $X_A$ extends to $\PN$ (see \cite[Ch.~5, Proposition~1.5]{GKZ}), translations of $\overline{\iota(T_{M'})}$ are also
  projectively equivalent to $X_{\pi(A)}$.
  Since $\iota(T_{M'})$ is non-empty and contained in $T_M$,
  the translations give a covering family of $X_A$.
\end{proof}

Let $f: X \dashrightarrow Y$ be a rational map between varieties. For a smooth point $x \in X$,
we denote by $d_xf: t_xX \rightarrow t_{f(x)}Y$
the tangent map 
between Zariski tangent spaces at $x$ and $f(x)$.
The \emph{rank} of $f$, denoted by $\rk (f)$, is defined to be the rank of the $\kk$-linear map
$d_xf$ for general $x \in X$.
Recall that $f$ is said to be \emph{separable} if 
the field extension $K(X)/K(f(X))$ is separable;
this condition is equivalent to $\rk(f) = \dim(f(X))$.

\begin{lem}\label{lem_rk&deg}
  Let $M$ be a free abelian group of finite rank.
  Let $M'' $ be a subgroup of $M$ and $g : T_M \twoheadrightarrow T_{M''} $ be the morphism induced by the inclusion $\beta : M'' \hookrightarrow M$.
  \begin{enumerate}[\normalfont \quad (a)]
  \item\label{item:lem_rk&deg:1}
    The inclusions $M'' \subset M''_{\R} \cap M \subset M$ induce
    a decomposition of $g$
    \[
    T_M \stackrel{g_1}{\arw} T_{M''_{\R} \cap M} \stackrel{g_2}{\arw} T_{M''},
    \]
    where
    $g_1$ is a morphism with reduced and irreducible fibers, and $g_2$ is a finite morphism.

  \item\label{item:lem_rk&deg:2} The rank of $g$ is equal to the rank of the $\KK$-linear map $\beta_{\KK} : M''_{\KK} \arw M_{\KK} $ obtained by tensoring $\KK$ with $\beta : M'' \hookrightarrow M$.
    In particular, $g$ is separable if and only if
    $\rk (\beta_{\kk}) = \rank(M'')$.
  \item\label{item:lem_rk&deg:3} Assume $p = \chara \KK > 0$.
    Let $a$ be the index $ [M''_{\R} \cap M : M'']$ and write $a = p^s b$ with integers $s \geq 0, b \geq 1 $ such that $ p \nmid b$.
    Then the degree, separable degree, inseparable degree of the finite morphism $g_2 $ are $a, b , p^s$ respectively.
  \end{enumerate}
\end{lem}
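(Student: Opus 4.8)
The plan is to exploit that $g$ and both of its factors $g_1,g_2$ arise from homomorphisms of lattices, so that their geometry can be read off from the anti\-equivalence $L \mapsto T_L = \Spec \kk[L]$ between finitely generated abelian groups and diagonalizable group schemes, under which a short exact sequence of groups yields a short exact sequence of the associated group schemes with the arrows reversed. For \ref{item:lem_rk&deg:1}, write $\widehat{M} := M''_{\R} \cap M$ for the saturation of $M''$ in $M$, so that $\beta$ factors as $M'' \hookrightarrow \widehat{M} \hookrightarrow M$ and hence $g = g_2 \circ g_1$. Since $\widehat{M}$ is saturated, $M/\widehat{M}$ is free, and applying $T_{(-)}$ to $0 \to \widehat{M} \to M \to M/\widehat{M} \to 0$ produces an exact sequence $1 \to T_{M/\widehat{M}} \to T_M \xrightarrow{g_1} T_{\widehat{M}} \to 1$ whose kernel $T_{M/\widehat{M}}$ is a torus; every fiber of $g_1$ is thus a coset of this connected reduced subgroup, hence reduced and irreducible. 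On the other hand $\widehat{M}/M''$ is finite, so $\kk[\widehat{M}]$ is generated over $\kk[M'']$ by finitely many elements, each integral (it satisfies a monomial relation $x^{d} \in \kk[M'']$), whence $g_2$ is finite.

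For \ref{item:lem_rk&deg:2}, by homogeneity of the torus it suffices to compute $\rk(d_e g)$ at the identity $e$. The cotangent space of $T_M$ at $e$ is canonically identified with $M_\kk$ via $u \mapsto \overline{z^u - 1} \in \mathfrak{m}_e/\mathfrak{m}_e^2$, and under this identification the cotangent map of $g$ is exactly $\beta_\kk : M''_\kk \to M_\kk$. Hence $d_e g$ is the $\kk$-dual $\beta_\kk^{\vee}$, whose rank equals $\rk(\beta_\kk)$. As $g$ is dominant onto $T_{M''}$, which has dimension $\rk(M'')$, the separability criterion $\rk(g) = \dim \overline{g(T_M)}$ recalled before the lemma becomes $\rk(\beta_\kk) = \rk(M'')$.

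For \ref{item:lem_rk&deg:3}, I would choose compatible bases by the elementary divisor theorem, writing $\widehat{M} = \bigoplus_{i=1}^{r} \Z e_i$ and $M'' = \bigoplus_{i=1}^{r} \Z d_i e_i$ with $\prod_i d_i = a$, so that $K(T_{\widehat{M}}) = \kk(x_1, \dots, x_r)$ and $K(T_{M''}) = \kk(x_1^{d_1}, \dots, x_r^{d_r})$. Building this extension up one variable at a time reduces the computation to the single-variable case $\kk(x)/\kk(x^d)$: writing $d = p^{s} b$ with $p \nmid b$ and $w = x^{p^s}$, the tower $\kk(x^d) = \kk(w^b) \subset \kk(w) \subset \kk(x)$ has a separable bottom of degree $b$ (the minimal polynomial $X^b - x^d$ of $w$ has nonzero derivative $b X^{b-1}$) and a purely inseparable top of degree $p^s$. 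Multiplicativity of separable and inseparable degrees in towers then gives $\deg(g_2) = a$, separable degree $\prod_i b_i = b$, and inseparable degree $\prod_i p^{s_i} = p^{s}$. The lattice\,--\,torus dictionary in (a) and the duality in (b) are routine; the one point demanding care is precisely this bookkeeping in (c), namely verifying that the separable and inseparable contributions of the several variables combine correctly, which is why I isolate a single monomial extension before invoking multiplicativity.
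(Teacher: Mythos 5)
Your proof is correct and follows essentially the same route as the paper's: both reduce everything to the lattice--torus dictionary and, for the quantitative parts, to compatible bases supplied by the elementary divisor theorem. The one place you genuinely diverge is part (b): the paper chooses elementary-divisor coordinates for $M'' \subset M$ once and for all, writes $g$ as $(z_1,\dots,z_n)\mapsto(z_1^{a_1},\dots,z_k^{a_k})$, and observes that both $\rk(g)$ and $\rk(\beta_{\kk})$ equal the number of exponents $a_i$ prime to $p$; you instead identify the cotangent space of $T_M$ at the identity with $M_{\kk}$ via $u\mapsto \overline{z^u-1}$ and recognize the cotangent map of $g$ as $\beta_{\kk}$ itself, which is coordinate-free and also disposes of the ``general point'' issue by homogeneity in one stroke. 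Your treatments of (a) (splitting off the saturation so that $g_1$ becomes a torus projection and $g_2$ an integral extension of monomial algebras) and of (c) (the one-variable towers $\kk(x^d)\subset\kk(x^{p^s})\subset\kk(x)$ together with multiplicativity of separable and inseparable degrees) are slightly more spelled-out versions of the paper's direct reading of the map $(z_1,\dots,z_k)\mapsto(z_1^{a_1},\dots,z_k^{a_k})$. Both arguments are complete; yours is marginally more intrinsic in (b) since it does not depend on the chosen basis.
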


\begin{proof}
  Set $n= \rank M, k= \rank M''$.
  By the elementary divisors theorem (see \cite[III, Theorem~7.8]{Lang}, for example),
  there exists a basis $e_1,\ldots,e_n $ of $M$ such that
  \[
  M'' = \Z a_1 e_1 \oplus \cdots \oplus  \Z a_k e_k \subset   \Z e_1 \oplus \cdots \oplus  \Z e_n =M
  \]
  for some positive integers $a_i$. Set $e''_i := a_i e_i \in M''$.
  By the bases $e_1,\ldots,e_n $ of $M$ and $e''_1,\ldots, e''_k $ of $M''$,
  we identify $M $ and $ M'' $ with $\Z^n$ and $ \Z^k$ respectively.
  Then $g : T_M = (\Kx)^n  \arw T_{M''} = (\Kx)^k $ is described as
  \begin{align}\label{eq_explicit_g}
    (\Kx)^n \arw (\Kx)^k \quad : \quad (z_1, \ldots,z_n) \mapsto (z_1^{a_1}, \ldots,z_k^{a_k}).
  \end{align}
  \vspace{.5mm}

  \noindent
  \ref{item:lem_rk&deg:1} 
  By \ref{eq_explicit_g},
  $g$ is decomposed as
  \[
  (\Kx)^n \arw (\Kx)^k  \arw  (\Kx)^k \quad  : \quad (z_1, \ldots,z_n) \mapsto (z_1, \ldots,z_k) \mapsto (z_1^{a_1}, \ldots,z_k^{a_k}).
  \]
  Since $M''_{\R} \cap M = \Z e_1 \oplus \cdots \oplus  \Z e_k  \subset M$,
  the assertion of \ref{item:lem_rk&deg:1} 
  follows.

  \vspace{1ex}
  \noindent
  \ref{item:lem_rk&deg:3} 
  Write $a_i = p^{s_i} b_i$ by integers $s_i \geq 0, b_i \geq 1 $ such that $ p \nmid b_i$.
  Since $g_2$ is the morphism
  \[
  (\Kx)^k  \arw  (\Kx)^k \quad  : \quad  (z_1, \ldots,z_k) \mapsto (z_1^{a_1}, \ldots,z_k^{a_k}) = (z_1^{p^{s_1} b_1}, \ldots,z_k^{p^{s_k} b_k}),
  \]
  the degree, separable degree, inseparable degree of $g_2 $ are
  $\prod_{i=1}^k a_i =a, \prod_{i=1}^k b_i =b, \prod_{i=1}^k p^{s_i} = p^s$
  respectively.

  \vspace{1ex}
  \noindent
  \ref{item:lem_rk&deg:2} 
  If $\chara \KK=0$, this statement is clear from \ref{eq_explicit_g}.
  Assume $p= \chara \KK >0$ and use the notation $a_i, s_i, b_i$ as above.
  Then the rank of $g$ is equal to
  $\#\set*{1 \leq i \leq k}{s_i = 0}$.
  On the other hand,
  $\beta_{\KK} : M''_{\KK} = \KK^{k} \arw M_{\KK} = \KK^n$ is the $\KK$-linear map defined by $e''_i \mapsto a_i e_i = p^{s_i} b_i e_i  \in M_{\KK}$ for $ 1 \leq i \leq k$.
  Hence the rank of $\beta_{\KK}$ is also equal to $\#\set*{1 \leq i \leq k}{s_i = 0}$.
  Thus we have $\rk (g) =\rk (\beta_{\KK})$.
  The last statement follows from $\dim g(T_M) =\dim T_{M''} =\rk (M'')$.
\end{proof}

\section{Structure of Gauss maps}
\label{sec:structure-gauss-maps}

In this section,
we prove Theorem \ref{thm_structure}
and describe several invariants (e.g., the rank) of Gauss maps of toric varieties
by combinatorial data.

In order to investigate a toric variety $X_A$ for $A \subset M$,
we may assume that $A$ spans the affine lattice $M$ due to the following remark.

\begin{rem}\label{thm:span-A-M}
  For a finite subset $A \subset M$,
  let $\theta : \Z^m \arw  \Aff (A) $ be an affine isomorphism for $ m =\rank \langle A -A \rangle $.
  Then $ \theta^{-1}(A)$ spans $\Z^m$ as an affine lattice and
  $X_{\theta^{-1}(A)}$ is naturally identified with $X_A$
  by \cite[Chapter 5, Proposition 1.2]{GKZ}.
  Hence any projectively embedded toric variety $X$ is projectively equivalent
  to $X_A$ for some $A \subset M$ with $\Aff (A)=M$.
\end{rem}

Let $A = \set{u_0, u_1, \dots, u_N} \subset M:=\ZZ^n$ be a finite subset
which spans the affine lattice $M$.
We denote each $u_i$ by a column vector as
\[
u_i =
\begin{bmatrix}
  u_{i,1} \\ \vdots \\ u_{i,n}
\end{bmatrix}.
\]
Then the morphism $\varphi_A $, defined by \ref{eq:defphiA} in \autoref{sec:introduction},
is described as
\[
\varphi_A: (\Kx)^n \rightarrow \PN \quad :  \quad z = (z_1, \dots, z_n) \mapsto [z^{u_0} : z^{u_1} : \dots : z^{u_N}],
\]
where
$z^{u_i} := z_1^{u_{i,1}} z_2^{u_{i,2}}\cdots z_n^{u_{i,n}}$.
By the assumption that $A$ spans $\ZZ^n$ as an affine lattice,
$\phi_A$ is an isomorphism onto an open subset of $X_A$.

Let us study the Gauss map
$\gamma: X_A \dashrightarrow \Gr(n, \PN)$ of $X_A \subset \PN$.

\begin{lem}\label{thm:matrix-Gamma}
  Let $A, \phi_A$ be as above, and
  let $x \in (\Kx)^n$. Then
  $\gamma(\varphi_A(x)) \in \Gr(n, \PN)$
  is expressed by the $\KK$-valued $(n+1) \times (N+1)$ matrix $\Gamma(x)$;
  more precisely, $\gamma(\varphi_A(x))$
  corresponds to the $n$-plain (i.e., $n$-dimensional linear subvariety of $\PN$)
  spanned by the $n+1$ points which are given as the row vectors of $\Gamma(x)$, where
  \begin{align*}
    \Gamma &: =
    \begin{bmatrix}
      z^{u_0} & z^{u_1} & \cdots & z^{u_N}
      \\
      u_{0,1} \cdot z^{u_0} & u_{1,1} \cdot z^{u_1} & \cdots & u_{N,1} \cdot z^{u_N}
      \\
      \vdots & \vdots && \vdots
      \\
      u_{0,n} \cdot z^{u_0} & u_{1,n} \cdot z^{u_1} & \cdots & u_{N,n} \cdot z^{u_N}
    \end{bmatrix}
    \\
    & \ =
\Bigg[     \,
    z^{u_0} \cdot 
    \begin{bmatrix}
      1
      \\
      u_0
    \end{bmatrix}
    \ 
    z^{u_1} \cdot 
    \begin{bmatrix}
      1
      \\
      u_1
    \end{bmatrix}
    \ 
    \cdots
    \ 
    z^{u_N} \cdot 
    \begin{bmatrix}
      1
      \\
      u_N
    \end{bmatrix}
    \,
  \Bigg]
\end{align*}
\end{lem}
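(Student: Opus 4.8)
The plan is to compute the tangent space $\TT_xX_A$ directly from the parametrization $\varphi_A$ and show it is spanned by the rows of $\Gamma(x)$. The key idea is that the embedded tangent space at a point $\varphi_A(x)$ is the projective span of the image point together with the images of the first-order derivatives of the parametrizing map, lifted to the affine cone $\A^{N+1}$.

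\begin{proof}
  Work on the affine cone $\A^{N+1}$ over $\PN$ and consider the lift
  $\tilde{\varphi}_A : (\Kx)^n \to \A^{N+1}$ sending
  $z \mapsto (z^{u_0}, z^{u_1}, \dots, z^{u_N})$,
  so that $\varphi_A$ is the composition of $\tilde\varphi_A$ with the projection
  $\A^{N+1} \setminus \{0\} \to \PN$.
  The embedded tangent space $\TT_xX_A \subset \PN$ is the projectivization of the linear span
  of $\tilde\varphi_A(x)$ together with the partial derivative vectors
  $\partial \tilde\varphi_A / \partial z_j$ evaluated at $x$, for $1 \leq j \leq n$;
  this is the standard description of the tangent space of a variety given parametrically
  (since $\varphi_A$ is an isomorphism onto an open subset of $X_A$ by the spanning assumption,
  $\varphi_A(x)$ is a smooth point and $\gamma$ is defined there).
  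Thus I would produce the required $n+1$ spanning points as the zeroth row
  $\tilde\varphi_A(x)$ and the $n$ rows obtained from the derivatives.

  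The main computation is to differentiate. Since
  $z^{u_i} = z_1^{u_{i,1}} \cdots z_n^{u_{i,n}}$, one has
  \[
  \frac{\partial z^{u_i}}{\partial z_j} = u_{i,j}\, z_1^{u_{i,1}} \cdots z_j^{u_{i,j}-1} \cdots z_n^{u_{i,n}}
  = u_{i,j} \, z_j^{-1}\, z^{u_i}.
  \]
  Hence the $j$-th partial derivative vector of $\tilde\varphi_A$ is
  $z_j^{-1}\,(u_{0,j} z^{u_0}, \dots, u_{N,j} z^{u_N})$.
  Since $x \in (\Kx)^n$, the scalar $z_j^{-1}$ is a nonzero unit and may be cleared without
  changing the linear span; the resulting vector is exactly the $j$-th row of $\Gamma(x)$
  (the row with entries $u_{i,j}\, z^{u_i}$). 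Together with the zeroth row
  $(z^{u_0}, \dots, z^{u_N})$ coming from $\tilde\varphi_A(x)$ itself, these are precisely
  the $n+1$ rows of the matrix $\Gamma(x)$, which matches both displayed forms once one
  reads the columns as $z^{u_i}\cdot\trans(1, u_i)$.

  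The only point requiring a little care, and the step I expect to be the main obstacle,
  is \emph{linear independence}: one must check that the $n+1$ row vectors are genuinely
  independent, so that they span an honest $n$-plane and $\gamma(\varphi_A(x))$ is a
  well-defined point of $\Gr(n,\PN)$ rather than a smaller-dimensional flat. This follows
  from the spanning hypothesis $\Aff(A) = M = \ZZ^n$: the latter forces the affine span of
  $\{u_0, \dots, u_N\}$ to have full dimension $n$, equivalently the $(n+1)$-row matrix
  $[\,\trans(1,u_0)\ \cdots\ \trans(1,u_N)\,]$ to have rank $n+1$, and scaling the $i$-th
  column by the nonzero unit $z^{u_i}$ preserves this rank. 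Hence $\Gamma(x)$ has rank $n+1$
  and its rows span an $n$-plane, completing the identification of $\gamma(\varphi_A(x))$
  with $\Gamma(x)$.
\end{proof}
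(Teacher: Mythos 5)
Your proposal is correct and follows essentially the same route as the paper: lift to the affine cone, identify the embedded tangent space with the span of the image point and the partial derivative vectors, compute $\partial(z^{u_i})/\partial z_j = u_{i,j}z^{u_i}/z_j$, and clear the unit $z_j^{-1}$ row by row. Your added verification that $\Aff(A)=M$ forces $\Gamma(x)$ to have rank $n+1$ is a small but legitimate supplement that the paper leaves implicit.
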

\begin{proof}
  Let $L_x \subset \PN$ be the $n$-plane spanned by
  the $n+1$ points which are given as the row vectors of
  \begin{equation}\label{eq:mat-TTxX-0}
    \begin{bmatrix}
      z^{u_0} & \cdots & z^{u_N}
      \\
      \diff{(z^{u_0})}{z_1} & \cdots & \diff{(z^{u_N})}{z_1}
      \\
      \vdots && \vdots
      \\
      \diff{(z^{u_0})}{z_n} & \cdots & \diff{(z^{u_N})}{z_n}
    \end{bmatrix}(x).
  \end{equation}
  Then $L_x$ coincides with the embedded tangent space $\TT_xX$,
  because of the equality $t_{x'}\hat L_x = t_{x'}\hat X$ of Zariski tangent spaces in $t_{x'}\A^{N+1}$ with $x' \in \hat x \setminus \set{0}$,
  where $\hat S \subset \A^{N+1}$ means the affine cone of $S \subset \PN$.
  On the other hand, \ref{eq:mat-TTxX-0} is calculated as
  \[
  \begin{bmatrix}
    z^{u_0} & \cdots & z^{u_N}
    \\
    u_{0,1} \cdot z^{u_0}/z_1 & \cdots & u_{N,1} \cdot z^{u_N}/z_1
    \\
    \vdots && \vdots
    \\
    u_{0,n} \cdot z^{u_0}/z_n  & \cdots & u_{N,n} \cdot z^{u_N}/z_n
  \end{bmatrix}.
  \]
  Since each row vector means the homogeneous coordinates of a point of $\PN$,
  by multiplying $z_i$ with the $(i+1)$-th row vector for $1 \leq i \leq n$,
  we have the matrix $\Gamma$ and the assertion.
\end{proof}

We interpret \autoref{thm:matrix-Gamma} by the Pl\"ucker embedding.
We regard $\PN$ as $\cP(V) = (V\setminus \{0\}) / \Kx$, the projectivization of $V:=\KK^{N+1}$.
Let $\Gr(n, \PN) \hookrightarrow \cP (\bigwedge^{n+1} V)$ be the Pl\"ucker embedding
and $[p_{i_0,\ldots,i_n}]_{(i_0 , i_1 , \ldots , i_n) \in I}$ be the Pl\"ucker coordinates on $\cP (\bigwedge^{n+1} V)$,
where
\[
I := \set{  ( i_0, i_1, \dots, i_n ) \in \N^{n+1} \, | \, 0 \leq i_0 < i_1 < \cdots < i_n \leq N }.
\]
\begin{lem}\label{lem_plucker}
  The composite morphism  $(\Kx)^n \stackrel{\gamma\circ \varphi_A}{\longrightarrow} \Gr(n, \PN) \hookrightarrow \cP (\bigwedge^{n+1} V)$
  maps $ z = (z_1, \dots, z_n) \in (\Kx)^n $ to    \[
  \left[
    \mu_{i_0, i_1, \dots, i_n}
    \cdot z^{u_{i_0} + u_{i_1} + \cdots + u_{i_n}}
  \right]_{( i_0 ,i_1 , \ldots , i_n) \in I} \in \cP \Big( \bigwedge^{n+1} V \Big),
  \]
  where
  \begin{align*}
    \mu_{i_0, i_1, \dots, i_n} &:=
    \det \begin{bmatrix}
      1 & 1 & \cdots & 1
      \\
      u_{i_0} & u_{i_1} & \cdots & u_{i_n}
    \end{bmatrix}
    \in \KK.
  \end{align*}
\end{lem}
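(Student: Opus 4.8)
The plan is to read off the Pl\"ucker coordinates directly from the matrix $\Gamma$ of \autoref{thm:matrix-Gamma}. Recall that under the Pl\"ucker embedding $\Gr(n,\PN) \hookrightarrow \cP(\bigwedge^{n+1} V)$, an $(n+1)$-dimensional subspace $W \subset V$ is sent to the point whose homogeneous coordinate $p_{i_0,\ldots,i_n}$ equals the maximal minor formed by columns $i_0 < i_1 < \cdots < i_n$ of any $(n+1)\times(N+1)$ matrix whose rows span $W$. By \autoref{thm:matrix-Gamma}, for $z = x \in (\Kx)^n$ the subspace corresponding to $\gamma(\varphi_A(x))$ is the row span of $\Gamma(x)$; hence $p_{i_0,\ldots,i_n}$ at $\gamma(\varphi_A(x))$ is the determinant of the $(n+1)\times(n+1)$ submatrix of $\Gamma(x)$ picked out by columns $i_0,\ldots,i_n$, i.e.
\[
p_{i_0,\ldots,i_n}
= \det
\begin{bmatrix}
  z^{u_{i_0}} & z^{u_{i_1}} & \cdots & z^{u_{i_n}}
  \\
  u_{i_0,1}\, z^{u_{i_0}} & u_{i_1,1}\, z^{u_{i_1}} & \cdots & u_{i_n,1}\, z^{u_{i_n}}
  \\
  \vdots & \vdots && \vdots
  \\
  u_{i_0,n}\, z^{u_{i_0}} & u_{i_1,n}\, z^{u_{i_1}} & \cdots & u_{i_n,n}\, z^{u_{i_n}}
\end{bmatrix}(x).
\]

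The key step is then a one-line application of multilinearity of the determinant in the columns: for each $j = 0,1,\ldots,n$ the $(j{+}1)$-th column is $z^{u_{i_j}}$ times the column vector $\begin{bmatrix} 1 \\ u_{i_j} \end{bmatrix}$, so factoring out these scalars yields
\[
p_{i_0,\ldots,i_n}
= z^{u_{i_0}} z^{u_{i_1}} \cdots z^{u_{i_n}}
\cdot
\det
\begin{bmatrix}
  1 & 1 & \cdots & 1
  \\
  u_{i_0} & u_{i_1} & \cdots & u_{i_n}
\end{bmatrix}.
\]
Since $z^{u_{i_0}} \cdots z^{u_{i_n}} = z^{u_{i_0} + u_{i_1} + \cdots + u_{i_n}}$ and the remaining determinant is exactly $\mu_{i_0,\ldots,i_n}$, evaluating at $x$ gives the asserted formula for the composite $\gamma \circ \varphi_A$ followed by the Pl\"ucker embedding.

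There is no genuine conceptual obstacle here: the statement is essentially a repackaging of \autoref{thm:matrix-Gamma} via the standard fact that Pl\"ucker coordinates are the maximal minors of a spanning matrix. The only points requiring care are bookkeeping ones, namely fixing the convention that $p_{i_0,\ldots,i_n}$ is the minor on the ordered column set $i_0 < \cdots < i_n$ (so that the indexing set $I$ and the sign of $\mu$ are matched consistently), and invoking \autoref{thm:matrix-Gamma} to guarantee that the rows of $\Gamma(x)$ really do span the subspace associated to $\gamma(\varphi_A(x))$. With those conventions in place the factorization above is immediate.
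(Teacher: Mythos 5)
Your proposal is correct and follows exactly the route the paper intends: the paper's proof is the one-line remark that the lemma ``directly follows from \autoref{thm:matrix-Gamma} and the definition of the Pl\"ucker embedding,'' and your computation of the maximal minors of $\Gamma(x)$ followed by factoring $z^{u_{i_j}}$ out of each column is precisely the calculation being left implicit there. Nothing is missing.
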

\begin{proof}
  This directly follows from \autoref{thm:matrix-Gamma} and the definition of the Pl\"ucker embedding.
\end{proof}

Set
\[
J := \{  ( i_0, i_1, \dots, i_n ) \in I \, | \,  \mu_{i_0, i_1, \dots, i_n} \neq 0 \}.
\]
By definition, 
$\mu_{i_0, i_1, \dots, i_n} \neq 0$
in $\KK$ if and only if $  u_{i_0} , u_{i_1} , \ldots, u_{i_n}$ span $M_{\KK}$ as an affine space. Hence
the finite set $B$ in \autoref{thm_structure} is described as
\[
B = \{ u_{i_0} + u_{i_1} + \dots + u_{i_n} \in M \, | \, (i_0 , i_1 , \ldots , i_n) \in J \}.
\]
Write $B = \{ b_0,b_1,\cdots,b_{\# B-1} \}$ for mutually distinct $b_j \in M$.
We define a linear embedding
\begin{equation}\label{eq:iota-d-hookrar}
  \begin{aligned}
    \PP^{\# B -1} &\hookrightarrow \cP \Big(\bigwedge^{n+1} V\Big)
    \\
    [Y_0 : Y_1 : \cdots : Y_{\# B -1}] &\mapsto [p_{i_0,i_1,\ldots,i_n}]_{(i_0 , i_1 , \ldots , i_n) \in I}
  \end{aligned}
\end{equation}
as follows. 
When $(i_0, i_1, \dots, i_n) \in J$,
there exists a unique 
$0 \leq j \leq \# B -1$ such that $b_j = u_{i_0} + u_{i_1} + \dots + u_{i_n}$.
For this $j$, we set
$p_{i_0,i_1,\ldots,i_n} = \mu_{i_0, i_1, \dots, i_n} \cdot Y_j$.
When $(i_0, i_1, \dots, i_n) \not\in J$,
we set
$p_{i_0,i_1,\ldots,i_n} = 0$.


By \autoref{lem_plucker} and the definition of the embedding~\ref{eq:iota-d-hookrar},
we have the following commutative diagram:
\begin{equation}\label{eq:pf-thm1}
  \begin{split}
    \xymatrix{
      (\Kx)^n  \ar[rrd]^{\varphi_{B}} \ar@{^{(}->}[r]^(0.6){\varphi_A} & X_A \ar@{-->}[r]^(0.3){\gamma} & \cP (\bigwedge^{n+1} V) \\
      & & \PP^{\# B -1} \ar@{^(->}[u]{}.
    }
  \end{split}
\end{equation}

\begin{proof}[Proof of \autoref{thm_structure}]
  By taking a basis of $M$,
  we may assume that $M = \Z^n$
  and use the notation as above.
  From the above diagram~\ref{eq:pf-thm1}, the embedding $\PP^{\# B -1} \hookrightarrow \cP (\bigwedge^{n+1} V)$
  gives an isomorphism between
  \[
  \overline{\gamma(X_A)} = \overline{\gamma \circ \varphi_A(  (\Kx)^n)}
  \textand X_{B}=\overline{\varphi_{B}((\Kx)^n )}.
  \]
  Hence \ref{thm:item-image} in \autoref{thm_structure} holds.

  The toric variety $X_{B}=\overline{\varphi_{B}((\Kx)^n )}$ contains $T_{\langle B-B \rangle}$ as an open dense orbit
  and
  the restriction $\varphi_B |_{T_M}$ is nothing but
  $T_M= (\Kx)^n \twoheadrightarrow T_{\langle B-B \rangle}$ induced by the inclusion $\langle B-B \rangle \hookrightarrow M$. Hence \ref{thm:item-map} in \autoref{thm_structure} holds by the diagram \ref{eq:pf-thm1}.

  To show \ref{thm:item-fiber}, we use the following claim.

  \begin{claim}\label{thm:gammaTM-stein-fact}
    The morphism $ \gamma|_{T_M} =\varphi_B$ is decomposed as
    \begin{align*}    T_M \stackrel{g_1}{\longrightarrow} T_{\langle B-B \rangle_{\R} \cap M} \stackrel{g_2}{\longrightarrow} T_{\langle B-B \rangle}
    \end{align*}
    by $ \langle B-B \rangle \subset \langle B-B \rangle_{\R} \cap M \subset M$,
    where $g_1$ is a morphism with reduced and irreducible fibers, and $g_2$ is a finite morphism.
  \end{claim}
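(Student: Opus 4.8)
The plan is to deduce the claim directly from \autoref{lem_rk&deg}\ref{item:lem_rk&deg:1}, which has already isolated exactly this kind of factorization for an arbitrary torus homomorphism induced by an inclusion of lattices. The first step is an identification: by part \ref{thm:item-map} of \autoref{thm_structure}, proved just above, the morphism $\gamma|_{T_M} = \varphi_B|_{T_M}$ is precisely the surjection $T_M \twoheadrightarrow T_{\langle B-B\rangle}$ induced by the inclusion $\langle B-B\rangle \hookrightarrow M$. Since $B$ is a finite subset of $M$, the difference set $B-B$ lies in $M$ and $\langle B-B\rangle$ is a genuine subgroup of $M$; hence this morphism is exactly the map $g$ of \autoref{lem_rk&deg} for the choice $M'' := \langle B-B\rangle$.

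The second step is then a verbatim application of \autoref{lem_rk&deg}\ref{item:lem_rk&deg:1}. Substituting $M'' = \langle B-B\rangle$, the chain of inclusions $M'' \subset M''_{\R}\cap M \subset M$ becomes $\langle B-B\rangle \subset \langle B-B\rangle_{\R}\cap M \subset M$, and the lemma produces the decomposition $T_M \stackrel{g_1}{\longrightarrow} T_{\langle B-B\rangle_{\R}\cap M} \stackrel{g_2}{\longrightarrow} T_{\langle B-B\rangle}$ in which $g_1$ has reduced and irreducible fibers while $g_2$ is finite. This is word for word the statement of the claim.

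I expect no real obstacle here, since the structural heart of the matter --- that the saturation $\langle B-B\rangle_{\R}\cap M$ of $\langle B-B\rangle$ in $M$ splits the torus map into a part with reduced and irreducible fibers followed by a finite part --- was already extracted in \autoref{lem_rk&deg} through the elementary divisors normal form \ref{eq_explicit_g}. The only point demanding a moment's care is the bookkeeping in the first step: one must confirm that the target torus and the defining inclusion appearing in part \ref{thm:item-map} of \autoref{thm_structure} coincide with those of $g$ in \autoref{lem_rk&deg}, which they do by the very construction of $B$. Once this match is recorded, the claim is immediate and requires no further computation.
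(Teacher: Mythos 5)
Your proposal matches the paper's proof exactly: the paper likewise notes that $\gamma|_{T_M}=\varphi_B|_{T_M}$ is the torus map induced by $\langle B-B\rangle\hookrightarrow M$ (established in part \ref{thm:item-map} of \autoref{thm_structure}) and then applies \ref{item:lem_rk&deg:1} of \autoref{lem_rk&deg} with $M''=\langle B-B\rangle$. The identification and the application are both correct, so nothing further is needed.
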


  \begin{proof}[Proof of \autoref{thm:gammaTM-stein-fact}]
    By applying \ref{item:lem_rk&deg:1} in \autoref{lem_rk&deg} to $ \langle B-B \rangle \subset M$,
    we have the assertion.
  \end{proof}

  The short exact sequence
  \[
  0 \arw  \langle B-B \rangle_{\R} \cap M \arw M \stackrel{\pi}{\arw} M /(\langle B-B \rangle_{\R} \cap M) \arw 0
  \]
  induces a short exact sequence of algebraic tori
  \begin{align}\label{eq_alg_gp}
    1 \arw T_{M /(\langle B-B \rangle_{\R} \cap M)} \arw T_M \stackrel{g_1}{\arw} T_{ \langle B-B \rangle_{\R} \cap M} \arw 1.
  \end{align}
  Hence $g_1^{-1}(1_{T_{\langle B-B \rangle_{\R} \cap M}}) = T_{M /(\langle B-B \rangle_{\R} \cap M)}$ holds for
  the identity element $1_{T_{\langle B-B \rangle_{\R} \cap M}} $ of the torus $T_{\langle B-B \rangle_{\R} \cap M}$.
  Applying \autoref{lem_lattice_hom} to the surjection $\pi:  M \arw M /(\langle B-B \rangle_{\R} \cap M) $,
  it holds that the closure
  \[
  \overline{g_1^{-1}(1_{T_{\langle B-B \rangle_{\R} \cap M}}) }  \subset X_A
  \]
  is projectively equivalent to $ X_{\pi(A)}$.

  Let $F $ be an irreducible component of any fiber of $\gamma |_{T_M}$ with the reduced structure.
  From \autoref{thm:gammaTM-stein-fact},
  $F$ is a fiber of $ g_1$.
  By \ref{eq_alg_gp},
  $F$ is the translation of $g_1^{-1}(1_{T_{\langle B-B \rangle_{\R} \cap M}}) = T_{M /(\langle B-B \rangle_{\R} \cap M)} $
  by an element of $ T_M$.
  Hence the closure $ \overline{F}$ is projectively equivalent to $ X_{\pi(A)}$
  by \autoref{lem_lattice_hom}.
\end{proof}

\begin{cor}\label{cor_rk&deg}
  Let $A, M, \gamma, B$ be as in \autoref{thm_structure}.
  Let $\gamma |_{T_M} = g_2 \circ g_1$ be the decomposition of $\gamma |_{T_M}$ in \autoref{thm:gammaTM-stein-fact}.
  Then the following hold.

  \begin{enumerate}
  \item\label{item:rkdeg-rk} The rank of $\gamma$ is equal to $\dim (\Aff_{\KK} (B))$.
    In particular, $\gamma $ is separable if and only if $ \dim (\Aff_{\KK} (B)) = \rank (\Aff (B))$.

  \item\label{item:rkdeg-deg} Assume $p = \chara\KK > 0$. Then we have
    \[
    \deg(g_2) = [\lin{B-B}_{\RR} \cap M : \lin{B-B}].
    \]
    For the maximum integer $s \geq 0$ such that $p^s \mid \deg(g_2)$,
    the separable degree and the inseparable degree of $g_2 $
    are $\deg(g_2)/p^s $ and $ p^s$ respectively.
    Hence the number of the irreducible components of a general fiber of $\gamma$
    is equal to $\deg(g_2)/p^s$, which is coprime to $p$.

  \end{enumerate}
\end{cor}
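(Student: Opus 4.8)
The plan is to reduce both parts to \autoref{lem_rk&deg} applied to the subgroup $M'' := \lin{B-B} \subseteq M$, exploiting the identification of $\gamma|_{T_M}$ furnished by \autoref{thm_structure}. By \ref{thm:item-map} the restriction $\gamma|_{T_M} = \varphi_B$ is precisely the torus homomorphism $g : T_M \twoheadrightarrow T_{\lin{B-B}}$ induced by the inclusion $\beta : \lin{B-B} \hookrightarrow M$, and by \autoref{thm:gammaTM-stein-fact} the given factorization $\gamma|_{T_M} = g_2 \circ g_1$ is exactly the decomposition produced by \ref{item:lem_rk&deg:1} of \autoref{lem_rk&deg} for this $M''$. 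Since $T_M$ is open and dense in $X_A$, the rank of $\gamma$ and the structure of its general fiber may be read off from $g$.

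For \ref{item:rkdeg-rk} I would first note $\rk(\gamma) = \rk(g)$ and apply \ref{item:lem_rk&deg:2} of \autoref{lem_rk&deg} to get $\rk(\gamma) = \rk(\beta_{\KK})$, where $\beta_{\KK} : \lin{B-B}_{\KK} \to M_{\KK}$. Since $\lin{B-B}$ is generated as a group by $B-B$, the image of $\beta_{\KK}$ is the $\KK$-linear span of $B - B$ inside $M_{\KK}$; its dimension equals $\dim \Aff_{\KK}(B)$, because the dimension of an affine span always equals the dimension of the linear span of the associated difference set. This gives $\rk(\gamma) = \dim \Aff_{\KK}(B)$. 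For the separability criterion, recall that $\gamma$ is separable iff $\rk(\gamma) = \dim \overline{\gamma(X_A)}$; by \ref{thm:item-image} we have $\overline{\gamma(X_A)} \cong X_B$, so $\dim \overline{\gamma(X_A)} = \rk \lin{B-B} = \rk \Aff(B)$, and separability is equivalent to $\dim \Aff_{\KK}(B) = \rk \Aff(B)$.

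For \ref{item:rkdeg-deg}, the equality $\deg(g_2) = [\lin{B-B}_{\RR} \cap M : \lin{B-B}]$ together with the stated separable and inseparable degrees of $g_2$ is immediate from \ref{item:lem_rk&deg:3} of \autoref{lem_rk&deg}, taking $a = [\lin{B-B}_{\RR} \cap M : \lin{B-B}]$: writing $a = p^s b$ with $p \nmid b$, the degree, separable degree and inseparable degree of $g_2$ are $a$, $b$ and $p^s$. Here $s$ is visibly the largest integer with $p^s \mid a = \deg(g_2)$, so the separable and inseparable degrees are $\deg(g_2)/p^s$ and $p^s$.

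The only point that needs genuine care is the count of irreducible components of a general fiber of $\gamma$, and this is the step I expect to be the main obstacle, since one must pass from $\gamma|_{T_M}$ to $\gamma$ on all of $X_A$. Concretely, a general closed fiber of the finite map $g_2$ consists of $b = \deg(g_2)/p^s$ reduced points, and $g_1$ has reduced irreducible fibers, so a general fiber of $g = g_2 \circ g_1$ is a disjoint union of exactly $b$ irreducible torus-translates (each projectively equivalent to $X_{\pi(A)}$ by \ref{thm:item-fiber}). To obtain the same count for a general fiber of $\gamma$ itself, I would invoke the standard fact that the number of irreducible components of a general fiber of a dominant map equals the separable degree of the algebraic closure of the base function field inside the total function field. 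Since $g_1$ is a surjection of tori, its generic fiber is geometrically integral, so this algebraic closure is $K(T_{\lin{B-B}_{\RR} \cap M})$, whose separable degree over $K(T_{\lin{B-B}}) = K(\overline{\gamma(X_A)})$ is $\deg_{\mathrm{sep}}(g_2) = b$. As $b$ is coprime to $p$ by construction, this completes the argument.
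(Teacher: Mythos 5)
Your proposal is correct and follows essentially the same route as the paper: both parts are obtained by applying \autoref{lem_rk&deg} to $M'' = \lin{B-B} \subset M$ after identifying $\gamma|_{T_M}$ with the torus homomorphism induced by $\lin{B-B} \hookrightarrow M$ via \autoref{thm_structure}, together with the observation that $\dim \Aff_{\KK}(B) = \dim \lin{B-B}_{\KK}$. The extra details you supply (the separability criterion via $\dim X_B = \rank\lin{B-B}$ and the component count via the separable degree of $g_2$) are sound and merely make explicit what the paper leaves to the reader.
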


We note that $\dim (\Aff_{\KK}(B) )= \dim \lin{B-B}_{\KK}$  holds since $ \Aff_{\KK}(B)$ is a parallel translation of $\lin{B-B}_{\KK} $ in $M_{\KK}$.

\begin{proof}[Proof of \autoref{cor_rk&deg}]
  We apply \autoref{lem_rk&deg} to $ M'' =\langle B-B \rangle \subset M$.
  In this case,
  $g$ in \autoref{lem_rk&deg} is $\gamma |_{T_M}$.
  Since the image of $M''_{\KK} \arw  M_{\KK}$ is nothing but $\langle B-B \rangle_{\KK} \subset M_{\KK}$,
  it holds that $ \rk (\gamma) = \dim \langle B-B \rangle_{\KK} =  \dim \Aff_{\KK} (B)$ by \ref{item:lem_rk&deg:2} in \autoref{lem_rk&deg}. This implies \ref{item:rkdeg-rk}.
  On the other hand,
  \ref{item:rkdeg-deg} follows from \ref{item:lem_rk&deg:3} in \autoref{lem_rk&deg} directly.
\end{proof}

In the following examples,
we denote the separable degree and the inseparable degree of a finite morphism $f$ by $\deg_s(f)$ and $\deg_i(f)$ respectively.

\begin{ex}\label{ex_intro:rkdeg:2}
  Let $A, B$ be as in \autoref{ex_intro} and
  assume $\chara\kk = 2$.
  Then $\rk (\gamma) = \dim \lin{B-B}_{\kk} = 0$.
  From
  \ref{item:rkdeg-deg} of \autoref{cor_rk&deg}, we can calculate $\deg(g_2) = \deg_i(g_2) = 2$ and $\deg_s(g_2) = 1$.
\end{ex}

\begin{ex}\label{thm:kaji's-ex}
  Kaji's example \cite[Example 4.1]{Kaji1986} can be interpreted as follows.

  Let $A = \set{0, 1, cp^m, cp^m+1} \subset M = \ZZ^1$,
  where $c, m$ are positive integers and $p = \chara\kk > 0$.
  Assume that $c$ and $p$ are relatively prime.
  Then
  \[
  B = \set{1, cp^m+1, 2cp^m+1},\
  \lin{B-B} = \lin{cp^m},\
  \lin{B-B}_{\RR} \cap M
  = M.
  \]
  Therefore $\deg(\gamma) = cp^m$, $\deg_i(\gamma) = p^m$, $\deg_s(\gamma) = c$.
  In particular, a general fiber of $\gamma$ with the reduced structure is equal to a set of $c$ points.
  \begin{figure}[htbp]
    \[
    \begin{xy}
      (-15,0)="1",(110,0)="2",
      (0,0)*{\bullet},
      (12,0)*{\bullet},
      (40,0)*{\bullet},
      (52,0)*{\bullet},
      (12,0)*{\mbox{\Large$\times$}},
      (52,0)*{\mbox{\Large$\times$}},
      (92,0)*{\mbox{\Large$\times$}},
      (0,5)*{0},
      (12,5)*{1},
      (40,5)*{cp^m},
      (55,5)*{cp^m+1},
      (92,5)*{2cp^m+1},
      (66,13)*{\bullet\;A},
      (79,13)*{\mbox{\Large$\times$}\,B},
      \ar "1";"2"
    \end{xy}
    \]
    \caption{Kaji's example.}
  \end{figure}
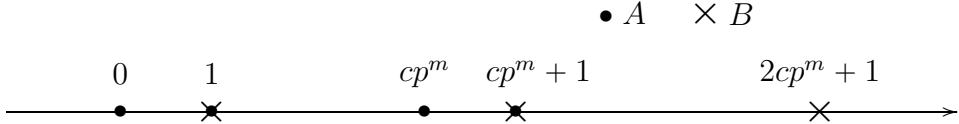

\end{ex}

As in \ref{item:rkdeg-deg}  of \autoref{cor_rk&deg},
the number of the irreducible components of a general fiber of $\gamma$ is coprime to $p = \chara\kk$
in the toric case.
However, the number can be a multiple of $p$ in the non-toric case.
The authors learned the following example from H.~Kaji
in a personal communication.

\begin{rem}[{A variant of \cite[Example 4.1]{Kaji1986}}]
  \label{thm:a-vari-kaji}
  We take $f: \PP^1 \dashrightarrow \PP^1$ to be a separable rational map whose degree is a multiple of $p$,
  and locally parameterize $g$ by
  $t \mapsto [1 : f_1(t)]$.
  We set $\phi: \PP^1 \dashrightarrow \PP^3$ to be 
  the rational map
  which is locally parameterized by
  \[
  t \mapsto [1: t: f_1^p: t \cdot f_1^p]
  \]
  and set $X \subset \PP^3$ to be the projective curve $\overline{\im(\phi)}$.
  Then a general fiber of $\gamma: X \dashrightarrow \Gr(1, \PP^3)$ with the reduced structure
  is equal to a set of $\deg(f)$ points.
  In order to show this, we may check that
  $\gamma$ is locally parameterized by
  $t \mapsto f_1^p(t)$, whose separable degree is equal to $\deg(f)$.
  We leave the details to the reader.

\end{rem}

The following are examples of toric varieties $X_A \subset \PP^N$ with codimension $1$ or $2$ such that the Gauss maps are birational.
Later, these examples will be used
in the proof of \autoref{cor_rk&fiber}.

\begin{ex}\label{prep-cor_rk&fiber}
  Assume $p = \chara \KK >0$.
  Let $e_1, \dots, e_n$ be the standard basis of $M:=\Z^n$, and set
  \[
  A = \{ 0, e_1,\ldots,e_n, a_1 e_1 + \ldots + a_n e_n \}  \subset \Z^n
  \]
  for $a_1 , \ldots, a_n \in \Z$ such that
  \begin{itemize}
  \item[(i)] $a_1, \ldots, a_n \not \equiv 0 \mod p$, and 
  \item[(ii)] $a_1 + \cdots + a_n  \not \equiv 1 \mod p$.
  \end{itemize}
  Then the Gauss map of the toric hypersurface $X_A \subset \PP^{n+1}$ is birational
  as follows.

  By condition (i),
  $A \setminus \{ e_i\}$ spans $M_{\KK}=\KK^n$ as an affine space for any $1 \leq i \leq n$.
  Hence it holds that
  \begin{equation}\label{eq:e1+..:1}
    e_1 + \cdots + e_n + a_1 e_1 + \ldots + a_n e_n  - e_i \in B.
  \end{equation}
  By condition (ii),
  $A \setminus \{ 0\}$ spans $M_{\R}=\KK^n$  as an affine space.
  Hence 
  \begin{equation}\label{eq:e1+..:2}
    e_1 + \cdots + e_n + a_1 e_1 + \ldots + a_n e_n \in B.
  \end{equation}
  Considering the difference between \ref{eq:e1+..:1} and \ref{eq:e1+..:2},
  we have $e_i \in B-B$ for any $i$. Therefore  $\langle B-B \rangle = M$.
  By \ref{thm:item-map} in \autoref{thm_structure},
  the Gauss map of $X_A$ is birational.

  For example,
  the conditions (i) and (ii) are satisfied for $a_1= \cdots = a_n=1$ (resp.\ $a_1= \cdots = a_n= -1$)
  when $n \not \equiv 1 \mod p$ (resp.\ $n \not \equiv  -1 \mod p$).
  Hence there exists a toric hypersurface in $\PP^{n+1}$ whose Gauss map is birational
  if $\chara \kk \neq 2$ or $n$ is even.
\end{ex}

As we will see later in \autoref{rem_rk_in_char2},
the Gauss map of any (not necessarily toric) hypersurface in $\PP^{n+1}$ cannot be birational if $\chara \kk =2$ and $n$ is odd.

\begin{ex}\label{ex_char2}
  Assume $n \geq 2$.
  Let
  \[
  A =\{0, e_1, \ldots, e_n, e_1 +e_2, e_2+e_3+ \cdots + e_n \} \subset M:=\Z^n.
  \]
  For $S := \{ 0, e_1, \ldots, e_n, e_1+e_2\} \subset A$, each of
  \[
  S\, \setminus \{e_1 + e_2\},\ S\, \setminus \{e_1\},\ S\, \setminus \{e_2\}
  \]
  spans the affine space $M_{\KK}$.
  Hence
  it holds that $e_1,e_2 \in  B-B $.
  In addition,
  $\{0, e_1, \ldots, e_n, e_2+e_3+ \cdots + e_n \} \setminus \{e_i\}$ spans the affine space $M_{\KK}$ for $2 \leq i \leq n$.
  Thus we have $e_i -e_2 \in  B-B $ for $2 \leq i \leq n$.
  Hence $\langle B-B \rangle = M$ and the Gauss map of $X_A \subset \PP^{n+2}$ is birational. 
\end{ex}

\section{Degenerate Gauss maps}
\label{sec:degen-gauss-maps}

\subsection{Developability criterion}
\label{sec:criterion-degeneracy}
By Theorem \ref{thm_structure},
the Gauss map of any given toric variety and
its general fibers can be explicitly determined
from the computation of $B$ and $\pi(A)$ as in \autoref{ex_intro}.
However,
it is not so clear for what kind of $A$ the Gauss map $\gamma$ of $X_A \subset \PN$ is degenerate,
i.e.,
$\rank  \langle  B-B \rangle < n$.
The following result tells us
a condition that $\gamma$ is degenerate.

\begin{prop}\label{main_thm}
  Let $M,A,\pi$ be as in Theorem \ref{thm_structure}.
  Let $\tilde\pi : M \arw \tilde{M}'$ be a surjective homomorphism of free abelian groups.
  Then $\tilde\pi$ 
  factors through $\pi: M \rightarrow M'$,
  i.e., $\langle B-B \rangle \subset \ker \tilde \pi$ 
  if and only if
  \[
  \sum_{j=0}^{\tNd} \dim \Aff_{\KK}(A_j) =n- \tNd,
  \]
  where $\tNd := \# \tilde\pi(A) -1$,
  $\tilde\pi(A ) = \{ \tilde u'_0,\ldots, \tilde u'_{\tNd}\}$, and $A_j= \tilde\pi^{-1}(\tilde u'_j) \cap A$.
\end{prop}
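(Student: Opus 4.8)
The plan is to reformulate the factorization condition combinatorially. As already noted in the statement, $\tilde\pi$ factors through $\pi$ if and only if $\langle B-B\rangle\subseteq\ker\tilde\pi$, and since $B\neq\emptyset$ (because $A$ spans $M_{\KK}$ as an affine space), this is equivalent to saying that $\tilde\pi(B)$ is a single point of $\tilde M'$. Each element of $B$ is a sum $u_{i_0}+\cdots+u_{i_n}$ over a subset $S=\{u_{i_0},\dots,u_{i_n}\}\subseteq A$ of $n+1$ points spanning $M_{\KK}$ as an affine space; I will call such an $S$ an \emph{affine basis} (over $\KK$) contained in $A$. For such $S$, writing $m_j(S):=\#(S\cap A_j)$, one has
\[
\tilde\pi\Big(\sum_{u\in S}u\Big)=\sum_{j=0}^{\tNd}m_j(S)\,\tilde u'_j=:w(S)\in\tilde M'.
\]
Thus the whole problem reduces to showing that $w(S)$ is independent of the affine basis $S\subseteq A$ if and only if $\sum_{j}\dim\Aff_{\KK}(A_j)=n-\tNd$. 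Throughout I will use that the affinely $\KK$-independent subsets of $A$ satisfy the exchange property (equivalently, form a matroid of rank $n+1$ via the lift $u\mapsto(1,u)\in\KK\oplus M_{\KK}$), in which the rank of the block $A_j$ is exactly $\dim\Aff_{\KK}(A_j)+1$.

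Next I would settle the ``if'' direction together with the underlying inequality. For any affine basis $S$, the $m_j(S)$ points of $S$ lying in $A_j$ are affinely $\KK$-independent and contained in $\Aff_{\KK}(A_j)$, so $m_j(S)\leq\dim\Aff_{\KK}(A_j)+1$; summing over $j$ gives the a priori bound $\sum_j\dim\Aff_{\KK}(A_j)\geq n-\tNd$. If equality holds, then for every affine basis $S$ all these inequalities are equalities, forcing $m_j(S)=\dim\Aff_{\KK}(A_j)+1$ for all $j$; hence $w(S)=\sum_j(\dim\Aff_{\KK}(A_j)+1)\,\tilde u'_j$ is independent of $S$, and $\tilde\pi(B)$ is a single point.

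For the converse I would argue by upgrading constancy of $w$ to constancy of the full multiplicity vector $(m_j(S))_j$. The key observation is that a single exchange $S'=(S\setminus\{a\})\cup\{a'\}$, with $a\in A_{j_1}$ and $a'\in A_{j_2}$ lying in two \emph{distinct} blocks, changes the weighted sum by $w(S')-w(S)=\tilde u'_{j_2}-\tilde u'_{j_1}$, which is nonzero in $\tilde M'$ since $\tilde u'_{j_1}\neq\tilde u'_{j_2}$. Assuming $w$ constant, it follows that no realizable single exchange can cross two blocks; since the affine bases contained in $A$ are connected under single exchanges (a standard consequence of the Steinitz exchange lemma), and within-block exchanges leave $(m_j(S))_j$ unchanged, the multiplicity vector is the same, say $(c_j)_j$, for every affine basis $S$. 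Finally, extending a maximal affinely $\KK$-independent subset of each $A_j$ to an affine basis of $M_{\KK}$ contained in $A$ shows $c_j\geq\dim\Aff_{\KK}(A_j)+1$, which combined with the reverse inequality above gives $c_j=\dim\Aff_{\KK}(A_j)+1$; therefore $\sum_j(\dim\Aff_{\KK}(A_j)+1)=\sum_j c_j=n+1$, i.e.\ the desired equality.

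I expect the main obstacle to be precisely this last implication: the passage from ``the weighted sum $w$ is constant'' (which is all that $\tilde\pi(B)$ being a single point directly provides) to ``the multiplicity vector is constant''. This is where the basis-exchange connectivity of the affine matroid is essential, together with the fact that a cross-block exchange necessarily moves $w$ by a nonzero lattice vector. Once that upgrade is secured, the block-extension step pinning down $c_j=\dim\Aff_{\KK}(A_j)+1$ is routine.
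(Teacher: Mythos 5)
Your proof is correct. The overall skeleton --- reducing the factorization condition to the statement that the block multiplicities $\#(S\cap A_j)$ of every affine basis $S\subseteq A$ equal $\dim\Aff_{\KK}(A_j)+1$, and then playing the identity $\sum_j(\#(S\cap A_j)-1)=n-\tNd$ against the termwise bound $\#(S\cap A_j)\leq\dim\Aff_{\KK}(A_j)+1$ --- is the same as the paper's, and your ``if'' direction coincides with the paper's argument essentially verbatim. The two proofs diverge on the converse. The paper isolates it as \autoref{thm:AffAj-eq}: assuming $\langle B-B\rangle\subseteq\ker\tilde\pi$, it takes an arbitrary $u\in A_j$, writes it affinely in a fixed basis $S$, and swaps $u$ in for each basis vector occurring with nonzero coefficient; each swap produces an element of $B-B$, forcing that basis vector into $A_j$ and hence $\Aff_{\KK}(A_j)=\Aff_{\KK}(S\cap A_j)$, from which the multiplicity count follows. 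You instead prove that the whole multiplicity vector $(m_j(S))_j$ is constant by combining the same cross-block-swap observation with connectivity of the basis-exchange graph of the affine matroid, and then pin down its value by extending a maximal affinely independent subset of each $A_j$ to a basis. Both converses rest on the identical local mechanism (a swap across two blocks moves $\tilde\pi\bigl(\sum_{u\in S}u\bigr)$ by $\tilde u'_{j_2}-\tilde u'_{j_1}\neq 0$); yours additionally invokes basis-exchange connectivity and the independent-set extension property, which are standard but which the paper's more hands-on coordinate argument avoids, while in exchange you never need to identify the affine spans $\Aff_{\KK}(A_j)$ with $\Aff_{\KK}(S\cap A_j)$ --- only their dimensions enter.
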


For a surjective homomorphism $\tilde\pi: M \rightarrow \tilde M'$
of free abelian groups,
the toric variety
$X_A$ is covered by the translations of
$\overline{T_{\tilde M'}} = X_{\tilde\pi(A)}$ by elements of $T_M$
due to \autoref{lem_lattice_hom}.
The homomorphism $\tilde\pi$ factors through $\pi$
if and only if
$\overline{T_{\tilde M'}}$ (or equivalently,
the translation of $\overline{T_{\tilde M'}}$ by any element of $T_M$)
is  contracted to one point by $\gamma$.

In general, a covering family $\set{F_{\alpha}}$ of a projective variety $X \subset \PN$ by subvarieties $F_{\alpha} \subset X$ is said to be \emph{developable} if
$F_{\alpha}$ is contracted to one point by the Gauss map of $X$
(i.e., $\TT_xX$ is constant on general $x \in F_{\alpha}$) for general $\alpha$.
\autoref{main_thm} is regarded as
a toric version of the developability criterion
(cf.\ \cite[2.2.4]{FP}, \cite{Fukasawa2005}, \cite[\textsection{}4]{expshr}; see also \autoref{sec:separable-gauss-maps} for the separable case).

Before the proof,
we illustrate \autoref{main_thm} by an example.

\begin{ex}\label{ex_intro2}
  Let $A \subset \Z^2$ be as in \autoref{ex_intro}.
  For the projection $\tilde \pi : \Z^2 \arw  \Z^1$ to the second factor,
  $\tilde \pi(A) = \{0,1, -1\}$. Then
  $A_0 =\tilde \pi^{-1}(0) \cap A,\, A_1 =\tilde \pi^{-1}(1) \cap A,\, A_2 =\tilde \pi^{-1}(-1) \cap A$
  are given by
  \[
  A_0 = \left\{
    \begin{bmatrix}
      0 \\ 0
    \end{bmatrix}
  \right\},\,
  A_1 = \left\{
    \begin{bmatrix}
      0 \\ 1
    \end{bmatrix}
  \right\},\,
  A_2 = \left\{
    \begin{bmatrix}
      1 \\ -1
    \end{bmatrix},
    \begin{bmatrix}
      -1 \\ -1
    \end{bmatrix}
  \right\}.
  \]
  Thus
  \[
  \sum_{j=0}^{\tNd} \dim \Aff_{\KK}(A_j) = \left\{
    \begin{array}{cl} 
      1  & \text{when }   \chara \KK \neq2,\\ 0 & \text{when } \chara \KK=2.\\
    \end{array} \right.
  \]
  On the other hand,
  $n-\tNd=2-2=0$ holds.
  Hence the equality in \autoref{main_thm} holds if and only if $\chara \KK =2$.
  Note that, in this example, the above
  $\tilde\pi$ can be identified with the natural projection
  $\pi: M \rightarrow M'$ in \autoref{thm_structure} when $\chara \KK=2$.
\end{ex}

To prove \autoref{main_thm}, we need the following lemma.

\begin{lem}\label{thm:AffAj-eq}
  In the setting of \autoref{main_thm},
  the homomorphism $\tilde \pi$ factors through $\pi$ if and only if
  \[
  \Aff_{\KK}(A_j) = \Aff_{\KK}(\set{u_{i_0}, u_{i_1}, \dots, u_{i_n}} \cap A_j)
  \quad (\RNj)
  \]
  for any $u_{i_0}, \dots, u_{i_n} \in A$ which span the affine space $M_{\KK}$.

\end{lem}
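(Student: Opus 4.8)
The plan is to translate the factorization condition into a statement about how the affine bases of $A$ distribute among the fibers $A_0,\dots,A_{\tNd}$, and then to analyze that statement by an affine-exchange argument. First I would reformulate the left-hand side. Since $\tilde\pi$ is additive, $\langle B-B\rangle\subseteq\ker\tilde\pi$ holds exactly when $\tilde\pi(b)$ is the same for all $b\in B$. Using $B=\{u_{i_0}+\cdots+u_{i_n}\mid (i_0,\dots,i_n)\in J\}$, and recalling that $(i_0,\dots,i_n)\in J$ precisely when $S=\{u_{i_0},\dots,u_{i_n}\}$ is an affine basis of $M_{\KK}$, this reads: the value
\[
\sum_{u\in S}\tilde\pi(u)=\sum_{j=0}^{\tNd}\#(S\cap A_j)\,\tilde u'_j
\]
is independent of the affine basis $S\subseteq A$. (Here I use that the $\tilde u'_j$ are distinct and that $\tilde\pi(u)=\tilde u'_j$ for every $u\in A_j$.) So the lemma becomes the equivalence between constancy of this sum and the equalities $\Aff_{\KK}(S\cap A_j)=\Aff_{\KK}(A_j)$ for all $S$ and all $j$.

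For the backward direction I would argue directly. Assume $\Aff_{\KK}(S\cap A_j)=\Aff_{\KK}(A_j)$ for every affine basis $S$ and every $j$. Since $S\cap A_j$ is affinely independent (being a subset of the affine basis $S$) and $A_j\neq\emptyset$, its cardinality equals $\dim\Aff_{\KK}(A_j)+1$, a number that does not depend on $S$. Hence $\#(S\cap A_j)$ is the same for every $S$, so $\sum_{j}\#(S\cap A_j)\,\tilde u'_j$ is constant, giving $\langle B-B\rangle\subseteq\ker\tilde\pi$.

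The forward direction is the main point, and I would prove its contrapositive by an exchange. Suppose the span condition fails: there is an affine basis $S$, an index $j_0$, and a point $y\in A_{j_0}$ with $y\notin\Aff_{\KK}(S\cap A_{j_0})$; in particular $y\notin S$. As $S$ is an affine basis there is a unique expression $y=\sum_{u\in S}\lambda_u u$ with $\sum_{u}\lambda_u=1$. Not every $u$ with $\lambda_u\neq0$ can lie in $A_{j_0}$, for otherwise $y\in\Aff_{\KK}(S\cap A_{j_0})$, contrary to the choice of $y$. Pick $x\in S$ with $\lambda_x\neq0$ and $x\notin A_{j_0}$; solving the relation for $x$ shows $x\in\Aff_{\KK}(S')$, so $S':=(S\setminus\{x\})\cup\{y\}$ again spans $M_{\KK}$ and, having $n+1$ elements, is again an affine basis. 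Then
\[
\sum_{u\in S'}\tilde\pi(u)-\sum_{u\in S}\tilde\pi(u)=\tilde\pi(y)-\tilde\pi(x)=\tilde u'_{j_0}-\tilde\pi(x)\neq 0,
\]
since $x\notin A_{j_0}$ forces $\tilde\pi(x)\neq\tilde u'_{j_0}$. Thus the sum is nonconstant, i.e.\ $\tilde\pi(B)$ is not a single point, i.e.\ $\langle B-B\rangle\not\subseteq\ker\tilde\pi$.

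The step I expect to require the most care is this final exchange: guaranteeing that, whenever $S\cap A_{j_0}$ fails to span $A_{j_0}$, one can perform a basis swap that \emph{crosses a fiber boundary}, so that the fiber-multiplicity sum genuinely changes. The key is the observation that the unique affine dependence expressing the escaping point $y$ on $S$ cannot be supported entirely inside $A_{j_0}$; everything else is routine bookkeeping with affine independence and the additivity of $\tilde\pi$.
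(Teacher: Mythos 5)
Your proof is correct and follows essentially the same route as the paper: the ``if'' direction is the identical counting argument via $\#(S\cap A_j)=\dim\Aff_{\KK}(S\cap A_j)+1$, and your contrapositive exchange in the forward direction (swapping $x\in S$ for $y\in A_{j_0}$ to produce $y-x\in B-B$ with $\tilde\pi(y-x)\neq 0$) is the same basis-exchange the paper runs in direct form to show each $u_{i_k}$ with nonzero coefficient lies in $A_j$.
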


\begin{proof}
  First, we show the ``only if'' part.
  Assume that $\tilde \pi$ factors through $\pi$.
  The inclusion ``$\supset$'' always holds.
  We show ``$\subset$''.
  Let $u \in A_j$.
  Since $u_{i_0}, \dots, u_{i_n}$ span the affine space $M_{\kk}$,
  we can write $u = \sum_{k=0}^n c_{i_k} u_{i_k}$
  with $\sum_{k=0}^n c_{i_k} = 1$ and $c_{i_k} \in \KK$.
  For any $k$ with $c_{i_{k}} \neq 0$, we have $u_{i_{k}} \in A_j$ as follows.

  If $c_{i_{k}} \neq 0$, we find that $\{u\} \cup \set{u_{i_{k'}}}_{0 \leq k' \leq n, k' \neq k}$ span
  the affine space $M_{\KK}$. Thus $u + \sum_{0 \leq k' \leq n, k' \neq k} u_{i_{k'}} \in B$, 
  and then $u - u_{i_{k}} \in B-B$.
  Since $\tilde \pi$ factors through $\pi$, we have $ \tilde\pi(u_{i_{k}}) = \tilde\pi(u) = \tilde{u}'_j$ by $u \in A_j = \tilde{\pi}^{-1}(\tilde{u}'_j) \cap A$;
  hence $u_{i_{k}} \in A_j$.
  
  Thus we have $u = \sum_{u_{i_k} \in A_j} c_{i_k} u_{i_k}$ with $\sum_{u_{i_k} \in A_j} c_{i_k} =1 $,
  i.e.,
  $u $ is contained in $ \Aff_{\KK}(\set{u_{i_0}, u_{i_1}, \dots, u_{i_n}} \cap A_j)$.
  This implies the assertion.

  \vspace{2mm}
  Next, we show the ``if'' part.
  For any $b \in B$,
  we can write $b= u_{i_0} + \cdots + u_{i_n}$ with $u_{i_0} , \ldots , u_{i_n} \in A$ which span the affine space $M_{\KK}$.
  Since the $n$-dimensional affine space $M_{\KK} $ is spanned by $n+1$ elements $u_{i_0} , \ldots , u_{i_n} \in A$,
  we have
  \[
  \# (\set{u_{i_0}, \dots, u_{i_n} }\cap A_j) = \dim  \Aff_{\KK}(\set{u_{i_0}, u_{i_1}, \dots, u_{i_n}} \cap A_j) +1.
  \] 
  Hence $ \# (\set{u_{i_0}, \dots, u_{i_n} }\cap A_j)  = \dim \Aff_{\KK}(A_j) + 1 $ holds for each $0 \leq j \leq \tNd$ by assumption.
  In particular,
  it holds that
  \[
  \tilde \pi(b)= \tilde \pi (u_{i_0} + \cdots + u_{i_n}) = \sum_{0 \leq j \leq \tNd} (\dim \Aff_{\KK}(A_j) + 1) \cdot u'_j ,
  \]
  which does not depend on $b \in B$.
  Thus we have $B-B \subset \ker \tilde \pi$,
  i.e.,
  $\tilde \pi $ factors through $\pi$.
\end{proof}

\begin{rem}\label{thm:B-B-wr-Aj-Aj}
  Assume that $\tilde\pi$ factors through $\pi$.
  From the ``if'' part in the above proof,
  $\# (\set{u_{i_0}, \dots, u_{i_n} }\cap A_j)  $ does not depend on $u_{i_0}, \dots, u_{i_n} \in A$ which span the affine space $M_{\KK}$.
  Thus each element of $B -B$ is written as a linear combination
  of elements of $\bigcup_{\RNj} (A_j-A_j)$.
\end{rem}

\begin{proof}[Proof of \autoref{main_thm}]
  Let us take $u_{i_0}, \dots, u_{i_n} \in A$ which span the affine space $M_{\KK}$.
  Then the latter condition of \autoref{thm:AffAj-eq} holds
  if and only if
  \begin{equation}\label{eq:sub-AffAj}
    \dim \Aff_{\KK}(A_j) = \#(\set{u_{i_0}, u_{i_1}, \dots, u_{i_n}} \cap A_j) - 1
  \end{equation}
  for any $\RNj$ (we note that ``$\geq$'' always holds).
  On the other hand, since $A = A_0 \sqcup A_1 \sqcup \dots \sqcup A_{\tNd}$, we have
  \[
  \sum_{\RNj} (\#(\set{u_{i_0}, u_{i_1}, \dots, u_{i_n}} \cap A_j) -1)
  = \# \set{u_{i_0}, u_{i_1}, \dots, u_{i_n}} - (\tNd + 1) = n-\tNd.
  \]
  Hence $\sum_{\RNj[\tNd]} \dim \Aff_{\KK}(A_j) =n- \tNd$ holds
  if and only if the equality~\ref{eq:sub-AffAj} holds for any $\RNj$.
  Therefore this proposition follows from \autoref{thm:AffAj-eq}.
\end{proof}

\begin{cor}\label{cor_fiber_separable}
  Let $A, \pi: M \rightarrow M'$ be as in \autoref{thm_structure}.
  Then it holds that $\rk (\gamma) \leq n-(\#\pi(A)-1)$. Moreover, if $\gamma$ is separable,
  then we have $\# \pi(A) = \rank M' +1$, which means $ X_{\pi(A)}$ is a linear projective space of dimension $\rank M'$.
\end{cor}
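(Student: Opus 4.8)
The plan is to derive both assertions by feeding the \emph{tautological} choice $\tilde\pi=\pi$ into \autoref{main_thm} and then combining with \autoref{cor_rk&deg} and \autoref{thm_structure}. Since $\ker\pi=\lin{B-B}_{\R}\cap M\supseteq\lin{B-B}$, the map $\pi$ factors through itself, so the equivalence in \autoref{main_thm} (with $\tilde\pi=\pi$) yields the equality
\[
\sum_{j=0}^{\#\pi(A)-1}\dim\Aff_{\kk}(A_j)=n-(\#\pi(A)-1),
\]
where, writing $\pi(A)=\{u'_0,\dots,u'_{\#\pi(A)-1}\}$, we put $A_j:=\pi^{-1}(u'_j)\cap A$. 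First I would recall from \ref{item:rkdeg-rk} of \autoref{cor_rk&deg} (and the remark following it) that $\rk(\gamma)=\dim\Aff_{\kk}(B)=\dim\lin{B-B}_{\kk}$, and then bound this quantity by the left-hand side above. By \autoref{thm:B-B-wr-Aj-Aj}, applied to $\tilde\pi=\pi$, every element of $B-B$ is a combination of elements of $\bigcup_j(A_j-A_j)$, giving the containment $\lin{B-B}\subseteq\sum_j\lin{A_j-A_j}$ inside $M$; tensoring with $\kk$ then gives
\[
\dim\lin{B-B}_{\kk}\ \leq\ \sum_j\dim\lin{A_j-A_j}_{\kk}\ =\ \sum_j\dim\Aff_{\kk}(A_j).
\]
Combining the two displays produces $\rk(\gamma)\leq n-(\#\pi(A)-1)$, which is the first claim.

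For the separable case I would first convert separability into a rank equality. By \ref{item:rkdeg-rk} of \autoref{cor_rk&deg}, separability of $\gamma$ means $\dim\lin{B-B}_{\kk}=\rank\lin{B-B}$; since $\lin{B-B}_{\R}\cap M$ is the saturation of $\lin{B-B}$ and hence has the same rank, the definition $M'=M/(\lin{B-B}_{\R}\cap M)$ gives $\rank M'=n-\rank\lin{B-B}$, so that $\rk(\gamma)=n-\rank M'$. Substituting into the inequality just proved yields $\#\pi(A)-1\leq\rank M'$. For the opposite inequality I would use that $\Aff(A)=M$ together with surjectivity of $\pi$ forces $\Aff(\pi(A))=M'$, whence $\lin{\pi(A)-\pi(A)}=M'$ has rank $\rank M'$; a finite subset that affinely spans a lattice of rank $r$ has at least $r+1$ elements, so $\#\pi(A)-1\geq\rank M'$. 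Hence $\#\pi(A)=\rank M'+1$, and since $\pi(A)$ is then an affine basis of $M'$, the monomial map $\varphi_{\pi(A)}$ is surjective onto $\PP^{\rank M'}$, so $X_{\pi(A)}\cong\PP^{\rank M'}$ is a linear projective space of the asserted dimension.

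The argument is short once the earlier results are available, so I do not expect a serious obstacle; the one point demanding care is the systematic distinction between the lattice rank $\rank$ (of $\lin{B-B}$ and of $M'$) and the $\kk$-dimension $\dim\lin{\,\cdot\,}_{\kk}$ of the tensored spaces. In positive characteristic these differ in general, and separability is precisely the hypothesis closing the gap $\dim\lin{B-B}_{\kk}\leq\rank\lin{B-B}$; keeping exact track of when this is an equality is where an error would most easily slip in. The only genuinely combinatorial ingredient is the containment $\lin{B-B}\subseteq\sum_j\lin{A_j-A_j}$, which I would extract verbatim from \autoref{thm:B-B-wr-Aj-Aj}.
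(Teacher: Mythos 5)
Your proposal is correct and follows essentially the same route as the paper's own proof: apply \autoref{main_thm} with $\tilde\pi=\pi$, identify $\rk(\gamma)$ with $\dim\lin{B-B}_{\kk}$ via \autoref{cor_rk&deg}, bound it by $\sum_j\dim\Aff_{\kk}(A_j)$ using the containment from \autoref{thm:B-B-wr-Aj-Aj}, and in the separable case combine $\rk(\gamma)=\rank\lin{B-B}=n-\rank M'$ with the fact that $\pi(A)$ affinely spans $M'$. The only difference is that you spell out a few steps the paper leaves implicit (why $\pi$ factors through itself, and why the lattice containment passes to the $\kk$-spans), which is harmless.
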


\begin{proof}
  We apply \autoref{main_thm} to the homomorphism $\pi$. Then
  it holds that $\sum_{j} \dim \lin{A_j-A_j}_{\KK} =n- (\#\pi(A)-1)$.
  From \autoref{cor_rk&deg}, we have $\rk (\gamma) = \dim \Aff_{\KK} (B) = \dim \lin{B-B}_{\KK}$.
  By \autoref{thm:B-B-wr-Aj-Aj},
  $\lin{B-B}$ is contained in the space $\lin{\set{A_j-A_j}_j}$.
  Thus $\rk (\gamma) \leq n- (\#\pi(A)-1)$ holds.

  If $\gamma$ is separable, $\rk (\gamma) = \rank \langle B-B \rangle = n - \rank (M')$ holds by \autoref{cor_rk&deg}.
  Hence $\rank(M') \geq \#\pi(A)-1$ holds by the above inequality.
  The converse inequality ``$\leq$'' always holds since $\pi(A)$ spans the affine lattice $M'$.
  Since $\pi(A)$ spans the affine lattice $ M'$,
  the equality $\# \pi(A) = \rank M' +1$ means $ X_{\pi(A)}$ is a linear projective space of dimension $\rank M'$.
\end{proof}

\begin{rem}
  The equality ``$\rk (\gamma) = n-(\#\pi(A)-1)$'' does \emph{not} hold in general.
  For example, set $A=\{0,1,p\}  \subset M = \Z^1$ with $p=\chara \KK >0$.
  Then we have
  \[
  B=\{1, p+1\}, \quad \lin{B-B} = \lin{p} , \quad  \lin{B-B}_{\RR} \cap M =M.
  \]
  Thus $\pi : M=\Z^1 \arw M/(\lin{B-B}_{\RR} \cap M) =\{0 \}$ is the zero map.
  Here we have $\rk (\gamma) = \dim \lin{B-B}_{\kk} = 0$
  and 
  $n-(\#\pi(A) - 1) = 1-(1-1) = 1$.
\end{rem}

\subsection{Separable Gauss maps, Cayley sums, and joins}
\label{sec:separable-gauss-maps}

In this subsection,
we study the case when the Gauss map is separable,
and prove Corollary \ref{cor_join} in the characteristic zero case.

\begin{defn}\label{def_cayley}
  Let $l \leq n$ be non-negative integers. Let $e_1,\ldots,e_l$ be the standard basis of $ \Z^l$.
  For finite sets $A^0,\ldots,A^l \subset \Z^{n-l}$,
  the Cayley sum $A^0 * \cdots * A^l  $ of $A^0,\ldots,A^l$ is defined to be
  \[
  A^0 * \cdots * A^l  := (A^0 \times \{0\}) \cup (A^1 \times \{e_1 \}) \cup \cdots (A^l \times \{e_l\})  \subset \Z^{n-l} \times \Z^l.
  \]
\end{defn}

Let $A$ be the Cayley sum of $A^0,\ldots,A^l \subset \Z^{n-l}$,
and assume that $A$ spans the affine lattice $ \Z^{n-l} \times \Z^l$.
For the projection $\tilde{\pi} : \Z^{n-l} \times \Z^l \arw \Z^l $
to the second factor, $X_{\tilde{\pi}(A)}$ is an $l$-plane
since $\tilde\pi(A) = \set{0, e_1, \dots, e_l}$.
By \autoref{lem_lattice_hom}, $X_A$ is covered by $l$-planes,
which are translations of $X_{\tilde{\pi}(A)} = \overline{T_{\Z^l}}$.
For this $\tilde\pi$,
$\tilde N'$ in \autoref{main_thm} is equal to $l$.
Thus,
the subtorus $ T_{\Z^l} \subset T_{\Z^{n-l} \times \Z^l}$ is contracted to one point by the Gauss map of $X_A$
if and only if
\begin{align}\label{eq_condition_for_sum}
  \sum_{j=0}^{l} \dim \Aff_{\KK}(A^j) =n- l.
\end{align}
In other words,
\ref{eq_condition_for_sum} is the condition for the developability
of the covering family obtained by translations of $\overline{T_{\Z^l}}$.
In fact,
any toric variety with separable Gauss map is described by a Cayley sum with the condition~\ref{eq_condition_for_sum}, as follows.

\begin{thm}\label{thm:subvar-in-join}
  Let $A, M$ be as in \autoref{thm_structure}.
  Assume that the Gauss map $\gamma$ of $X_A \subset \PN$ is separable, and set $l = \delta_{\gamma}(X)$.
  Then there exist finite subsets $A^0, \dots, A^{l} \subset \ZZ^{n-l}$
  with $\sum_{j=0}^{l} \dim \Aff_{\KK}(A^j) =n- l$
  such that $A$ is identified with the Cayley sum of $A^0, \dots, A^{l} \subset \ZZ^{n-l}$ under some affine isomorphism $M \simeq \ZZ^{n-l} \times \ZZ^{l}$.
\end{thm}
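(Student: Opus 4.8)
The plan is to extract from separability, via \autoref{cor_fiber_separable}, enough rigidity on $\pi(A)$ to normalize it to a standard affine basis, and then to read off the Cayley structure from a splitting of $\pi$; the numerical condition will follow from \autoref{main_thm} applied to $\pi$ itself.

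First I would record the consequences of separability. By the final assertion of \autoref{thm_structure}, $l = \delta_{\gamma}(X) = \rk M'$. Since $\gamma$ is separable, \autoref{cor_fiber_separable} gives $\#\pi(A) = \rk M' + 1 = l+1$; as $\pi(A)$ spans the affine lattice $M'$ and has exactly $\rk(M')+1$ elements, writing $\pi(A) = \{v_0, \dots, v_l\}$ the differences $v_1-v_0,\dots,v_l-v_0$ form a $\ZZ$-basis of $M'$, i.e.\ $\pi(A)$ is an \emph{affine basis} of $M'$. Hence there is an affine isomorphism $\alpha : M' \simeq \ZZ^l$ sending $\pi(A)$ to $\{0,e_1,\dots,e_l\}$, say $v_0 \mapsto 0$ and $v_j \mapsto e_j$.

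Next I would produce the Cayley decomposition. The kernel $K := \ker\pi = \langle B-B\rangle_{\RR}\cap M$ is the intersection of $M$ with an $\RR$-subspace, hence saturated, so $M' = M/K$ is free and the sequence $0 \to K \to M \xrightarrow{\pi} M' \to 0$ splits; this gives $M \simeq K \times M'$ with $\pi$ the second projection and $K \simeq \ZZ^{n-l}$. Composing with $(\id_K,\alpha)$ and a choice of basis $K \simeq \ZZ^{n-l}$ produces an affine isomorphism $\Phi : M \simeq \ZZ^{n-l}\times\ZZ^l$ with $\pr_2\circ\Phi = \alpha\circ\pi$. Putting $A_j := \pi^{-1}(v_j)\cap A$, so that $A = \bigsqcup_{j=0}^l A_j$, and $A^j := \pr_1(\Phi(A_j)) \subset \ZZ^{n-l}$, each $u \in A_j$ satisfies $\pr_2(\Phi(u)) = \alpha(v_j) = e_j$ (with $e_0 := 0$); thus $\Phi(A_j) = A^j\times\{e_j\}$ and $\Phi(A) = A^0 * \cdots * A^l$ is precisely the Cayley sum of \autoref{def_cayley}.

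Finally, for the dimension identity I would apply \autoref{main_thm} to $\tilde\pi = \pi$. Since $\langle B-B\rangle \subset \ker\pi$ by the definition of $M'$, the map $\pi$ trivially factors through itself, so the criterion yields $\sum_{j=0}^{l}\dim\Aff_{\KK}(A_j) = n-l$, where $\tNd = \#\pi(A)-1 = l$ comes from the separability step. As $\Phi$ is an affine isomorphism with $\Phi(A_j) = A^j\times\{e_j\}$, we have $\dim\Aff_{\KK}(A_j) = \dim\Aff_{\KK}(A^j)$, which gives the required $\sum_{j=0}^l \dim\Aff_{\KK}(A^j) = n-l$. The one place demanding care, and the main obstacle, is constructing $\Phi$ so that it is simultaneously compatible with $\pi$ (separating the fibers $A_j$ into distinct second coordinates) and normalizes $\pi(A)$ to the standard affine basis; this is exactly where the saturatedness of $K$ (for the splitting) and the affine-basis output of separability (for $\alpha$) are both indispensable. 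Everything else is transport through $\Phi$ together with the single invocation of \autoref{main_thm}.
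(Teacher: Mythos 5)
Your proposal is correct and follows essentially the same route as the paper: separability plus \autoref{cor_fiber_separable} forces $\#\pi(A)=l+1$ so that $\pi(A)$ is an affine basis of $M'$, a splitting of $\pi$ then exhibits $A$ as the Cayley sum of the translated fibers $A^j$, and the sum condition comes from \autoref{main_thm} applied to $\pi$ itself (exactly as in the proof of \autoref{cor_fiber_separable}). The only cosmetic difference is that you make explicit the saturatedness of $\ker\pi$ and the transport of $\dim\Aff_{\KK}$ through the affine isomorphism, which the paper leaves implicit.
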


\begin{proof}
  Let $A = \set{u_0, u_1, \dots, u_N}$ and $\pi : M \arw M'$ be
  as in \autoref{thm_structure},
  where we may assume $u_0=0 \in M$.
  From \autoref{thm_structure} and \autoref{cor_fiber_separable}, it follows that
  $\rank M' = l$ and $\# \pi (A) = l+1$ since $\gamma$ is separable by assumption.
  Set $\pi (A)=\{u'_0, \ldots,u'_l \} $.
  Without loss of generality,
  we may assume that $u'_0 =\pi(u_0)=0 \in M'$.
  Then $ u'_1, \ldots,u'_l $ form a basis of $M'$
  since $\pi(A) $ spans the affine lattice $M' \simeq \Z^l$.
  Set $A_j = \pi^{-1}(u'_j) \cap A$.

  Fix a splitting $s : M' \arw M$ of the short exact sequence $0 \arw \ker \pi \arw M \stackrel{\pi}{\arw} M' \arw 0$.
  Then the induced isomorphism
  \[
  M \stackrel{\sim}{\arw} \ker \pi \times M' \quad : \quad u \mapsto (u - s ( \pi(u)), \pi(u))
  \]
  gives an identification of $A \subset M$ with
  \begin{equation}\label{eq:expressA-Cayley}
    \bigcup_{j=0}^l \left( A^j  \times  \{u'_j \}  \right) \subset \ker \pi \times M' ,
  \end{equation}
  where $A^j := A_j -s(u'_j) \subset \ker \pi$ is the parallel translation of $A_j$ by $s(u'_j)$.
  Since
  $ u'_1, \ldots,u'_l $ form a basis of $M'$, $u_0' = 0$,
  and $\ker \pi \simeq \ZZ^{n-l}$,
  this theorem follows.
\end{proof}

In order to prove \autoref{cor_join}, we consider a relation between
Cayley sums and joins.
For projective varieties $X_1, \dots, X_m \subset \PN$,
we define the \emph{join} of $X_1, \dots, X_m$ to be the closure of
$\bigcup_{x_1 \in X_1, \dots, x_m \in X_m} \Lambda_{x_1, \dots, x_m} \subset \PN$,
where $\Lambda_{x_1, \dots, x_m}$ is the linear variety spanned by the points $x_1, \dots, x_m$.

\begin{lem}\label{lem_torus_inv_sub}
  Let $A \subset \ZZ^{n-l} \times \ZZ^l$ be the Cayley sum of $A^0, \dots, A^l \subset \ZZ^{n-l}$ with $\Aff(A) = \ZZ^{n-l} \times \ZZ^l$.
  Then the following hold.
  \begin{enumerate}[\quad \normalfont (a)]
  \item \label{lem_torus_inv_sub:1}
    $X_{A^0}, \ldots,X_{A^l}$ are embedded into $X_A$ as torus invariant subvarieties,
    and they are mutually disjoint.

  \item \label{lem_torus_inv_sub:2}
    $X_{A}$ is contained in the join of $X_{A^0}, \ldots,X_{A^l} \subset \PN$,
    and the codimension of $X_{A}$ in the join is
    $l - n + \sum_{j=0}^l \rank \Aff(A^j)$.

  \end{enumerate}
\end{lem}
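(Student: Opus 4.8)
The plan is to work in explicit block coordinates adapted to the Cayley structure. Write $M=\ZZ^{n-l}\times\ZZ^l$ with torus coordinates $(s,w)$, $s\in(\Kx)^{n-l}$, $w=(w_1,\dots,w_l)\in(\Kx)^l$, and set $w_0:=1$ and $e_0:=0$. Index the homogeneous coordinates of $\PN$ by the elements of $A=\bigcup_{j=0}^l(A^j\times\{e_j\})$, so that they split into $l+1$ disjoint blocks, the $j$-th indexed by $A^j\times\{e_j\}$. With this labelling, $\varphi_A$ sends $(s,w)$ to the point whose coordinate at $(u,e_j)$ equals $w_j\,s^u$; thus block $j$ of $\varphi_A(s,w)$ is exactly $w_j\cdot\varphi_{A^j}(s)$, placed in the coordinate subspace $\PP(V_j)\subset\PN$ spanned by the $j$-th block, where $V=\KK^{N+1}=\bigoplus_{j=0}^l V_j$ is the corresponding direct sum decomposition.

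For \ref{lem_torus_inv_sub:1}, I would first note that each $\PP(V_j)$ is invariant under the $T_M$-action on $\PN$, since the torus acts diagonally on coordinates. The coordinate formula gives $t\cdot\varphi_{A^j}(s)=\varphi_{A^j}(ss')$ for $t=(s',w')$, so the closure of $\{\varphi_{A^j}(s)\}$ inside $\PP(V_j)$, which is by definition a copy of $X_{A^j}$, is $T_M$-invariant. To place this copy inside $X_A$ I would take limits in the torus: letting $w_j\to\infty$ (resp.\ all $w_k\to0$ when $j=0$) with the other coordinates fixed drives $\varphi_A(s,w)$, after rescaling in $\PN$, to $\varphi_{A^j}(s)$, whence $X_{A^j}\subset X_A$. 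Disjointness of $X_{A^0},\dots,X_{A^l}$ is then immediate, as they lie in the pairwise non-intersecting subspaces $\PP(V_0),\dots,\PP(V_l)$ arising from the direct sum $V=\bigoplus_j V_j$.

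For \ref{lem_torus_inv_sub:2}, the same description yields, on affine cones, the identity $\widehat{\varphi_A(s,w)}=\sum_{j=0}^l w_j\,\hat p_j$, where $\hat p_j\in V_j$ is the lift of $p_j:=\varphi_{A^j}(s)\in X_{A^j}$. This exhibits every point of $\varphi_A(T_M)$ on the linear span $\Lambda_{p_0,\dots,p_l}$ of points $p_j\in X_{A^j}$, so, taking closures, $X_A$ lies in the join $J$ of $X_{A^0},\dots,X_{A^l}$. For the codimension I would compute $\dim J$ using the independence of the $V_j$: the parametrization $(p_0,\dots,p_l,[\lambda])\mapsto\bigl[\sum_j\lambda_j\hat p_j\bigr]$ of $J$ is generically finite onto its image, because a general point of $J$ recovers each summand $\lambda_j\hat p_j$ as its block-$j$ component and hence recovers $p_j$ and $\lambda_j$ up to a global scalar. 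Therefore $\dim J=\sum_{j=0}^l\dim X_{A^j}+l=\sum_{j=0}^l\rank\Aff(A^j)+l$, and since $\dim X_A=n$ (as $\Aff(A)=\ZZ^{n-l}\times\ZZ^l$), the codimension equals $\dim J-\dim X_A=l-n+\sum_{j=0}^l\rank\Aff(A^j)$.

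The main obstacle I anticipate is the dimension count for the join in \ref{lem_torus_inv_sub:2}: one must verify that the block (direct sum) structure genuinely forces the join parametrization to be generically finite onto its image, so that no unexpected drop in dimension occurs. The containment, torus-invariance, and disjointness in \ref{lem_torus_inv_sub:1} are then comparatively routine consequences of the explicit coordinates.
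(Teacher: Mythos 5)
Your proposal is correct and follows essentially the same route as the paper: the same block decomposition of the coordinates, the same degeneration $w_k/w_j\to 0$ to place each $X_{A^j}$ inside $X_A$, and the same dimension count for the join via the independence of the coordinate subspaces. The only difference is cosmetic: you spell out the generic finiteness of the join parametrization, which the paper leaves implicit in its appeal to the disjoint coordinate subspaces.
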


\begin{proof}
  \begin{inparaenum}[(a)]
  \item 
    Write $A^j = \{u^j_{0}, \ldots,u^j_{N_j}\} \subset \Z^{n-l}$ for $ N_j= \# A^j -1$.
    Set $N=\# A -1 = \sum_{j=0}^l (N_j+1) -1$
    and let $\{ X^j_{i}\}_{0 \leq i \leq N_j, 0 \leq j \leq l}$ be the homogeneous coordinates on $ \PP^{N}$.
    By the definition of the Cayley sum $A$,
    it holds that
    \begin{equation}\label{eq:phiAyz}
      \begin{aligned}
        \varphi_{A }(z, w) &= [w_j z^{u_i^{j}}]_{0 \leq i \leq N_j, 0 \leq j \leq l} \\
        &= [ w_0z^{u^0_{0}} : \cdots : w_0 z^{u^0_{N_0}} :
        w_1 z^{u^1_{0}} : \cdots : w_1 z^{u^1_{N_1}} :
        \cdots: w_l z^{u^l_{0}} : \cdots : w_l z^{u^l_{N_l}}]
      \end{aligned}
    \end{equation}
    in $\PP^N$
    for $(z, w)=(z_1,\ldots,z_{n-l}, w_1,\ldots,w_l)
    \in (\Kx)^{n-l} \times (\Kx)^{l} = T_{\Z^{n-l} \times \Z^{l}}$,
    where we set $w_0 = 1$.
    For fixed $0 \leq j \leq l$ and $z \in (\Kx)^{n-l} = T_{\Z^{n-l}}$,
    $\phi_A(z, w)$ converges to
    \begin{equation}\label{eq:phiAjz}
      [0:\cdots : 0 : z^{u^j_{0}} : \cdots : z^{u^j_{N_j}} : 0:\cdots : 0] \in \PP^N
    \end{equation}
    when $w_k/w_j \rightarrow 0$ for $0 \leq k \neq j \leq l$.
    Thus, the point \ref{eq:phiAjz} is
    contained in the closure $\overline{\varphi_{A }(T_{\Z^l \times \Z^{n-l}} )}=X_A$.
    In other words, $\varphi_{A^j}(z) = [z^{u^j_{0}} : \cdots : z^{u^j_{N_j}} ] \in \PP^{N^j} $ is contained in $X_A$ for any $z \in (\Kx)^{n-l} = T_{\Z^{n-l}}$
    by embedding $\PP^{N_j}$ into $\PP^N$ as
    \begin{equation}\label{eq:PNj=Xji}
      \PP^{N_j}  = (X^{j'}_{i}=0)_{j' \neq j, 0 \leq i \leq N_{j'}} \subset  \PP^{N} .
    \end{equation}
    Since $X_{A^j}$ is the closure of $\varphi_{A^j}(T_{\Z^{n-l}})$,
    $X_{A^j} \subset \PP^{N_j}$ is contained in $X_A$.

    The action of $T_{\Z^{n-l} \times \Z^{l}}  $ on $X_A$ is described as
    \[
    (z,w) \cdot [X^j_i]_{0 \leq i \leq N_j, 0 \leq j \leq l} = [ w_j z^{u^j_i} X^j_i]_{0 \leq i \leq N_j, 0 \leq j \leq l}
    \]
    for $ (z,w ) = (z_1,\ldots,z_{n-l}, w_1,\ldots,w_l) \in  T_{\Z^{n-l} \times \Z^{l}} $ and $ [X^j_i]_{0 \leq i \leq N_j, 0 \leq j \leq l} \in X_A$,
    where $w_0 = 1$ as before.
    Therefore $X_{A^j} \subset X_A$ is a torus invariant subvariety.
    Since $\PP^{N_0}, \ldots, \PP^{N_l} \subset \PP^N$ are mutually disjoint by \ref{eq:PNj=Xji},
    so are $X_{A^0}, \ldots, X_{A^l} \subset X_A$.

    \vspace{1ex}
    \noindent
  \item
    For $(z,w ) \in  T_{\Z^{n-l} \times \Z^{l}}$,
    the image $\varphi_{A}(z,w) \in X_A \subset \PN$ is described by \ref{eq:phiAyz}, and
    $\varphi_{A^j}(z) \in X_{A^j} \subset \PN$ is described by \ref{eq:phiAjz}
    for each $0 \leq j \leq l$.
    Hence $\varphi_{A}(z,w)  $ is contained in
    the $l$-plane spanned by $\varphi_{A^0}(z), \varphi_{A^1}(z), \cdots, \varphi_{A^l}(z) $.
    Thus $X_{A}$ is contained in the join of $X_{A^0}, \ldots,X_{A^l}$.
    From \ref{eq:PNj=Xji},
    the dimension of the join of $X_{A^0}, \ldots,X_{A^l}$ is
    $l+ \sum_{j=0}^l \dim X_{A^j}$,
    which is equal to $l + \sum_{j=0}^l \rank \Aff(A^j)$.
    Since $\dim(X_A) = n$, the assertion about the codimension follows.
  \end{inparaenum}
\end{proof}

\begin{ex}
  Let $A \subset \Z^2 \times \Z^1$ be the Cayley sum of
  \[
  A^0 = \Set{\begin{bmatrix}
      0 \\ 0
    \end{bmatrix},
    \begin{bmatrix}
      1 \\ 0
    \end{bmatrix},
    \begin{bmatrix}
      2 \\ 0
    \end{bmatrix}
  }, \
  A^1 = \Set{\begin{bmatrix}
      0 \\ 0
    \end{bmatrix},
    \begin{bmatrix}
      0 \\ 1
    \end{bmatrix},
    \begin{bmatrix}
      0 \\ 2
    \end{bmatrix},
    \begin{bmatrix}
      0 \\ 3
    \end{bmatrix}
  }
  \subset \Z^2.
  \] 
  Then it holds that
  \[
  l - n + \sum_{j=0}^l \rank \Aff(A^j) = 1 - 3 + (1+1) =0.
  \]  
  Hence $X_A$ is the join of $X_{A^0}$ and $X_{A^1}$.
  In fact,
  \[
  X_A = \overline{ \big\{ [1: x: x^2 : w : w y : w y^2 : w y^3]  \, | \,  (x,y,w) \in (\Kx)^3= T_{\Z^2 \times \Z^1} \big\} } \subset \PP^6,
  \]
  and the conic $X_{A^0} \subset \PP^2$ and the twisted cubic $X_{A^1} \subset \PP^3$ are embedded into $X_A$ as
  \begin{align*}
    X_{A^0} &=\overline{ \big\{ [1: x: x^2 : 0 : 0 : 0 : 0] \, | \, x \in \Kx \big\} } \hspace{1mm} \subset X_A, \\
    X_{A^1} &=\overline{ \big\{  [0:0:0 : 1 : y : y^2 : y^3] \, | \, y \in \Kx \big\} }  \subset X_A.
  \end{align*}
\end{ex}

\begin{rem}\label{thm:subvar-in-join:rem}
  In \autoref{thm:subvar-in-join},
  the codimension of $X_A$ in the join of $X_{A^0}, \dots, X_{A^l}$
  is
  $\sum_{j=0}^l (\rank \Aff(A^j) - \dim \Aff_{\kk}(A^j))$
  by \autoref{lem_torus_inv_sub}.
\end{rem}

\vspace{1mm}
Now we can prove \autoref{cor_join} immediately.

\begin{proof}[Proof of Corollary \ref{cor_join}]
  We may assume that
  $X=X_A$ for some finite set $A \subset M$ with $\Aff (A)=M$.
  Then the assertion follows from
  \autoref{thm:subvar-in-join} and \autoref{thm:subvar-in-join:rem} since
  the equality $\rank \Aff(A^j) = \dim \Aff_{\kk}(A^j)$ holds in $\chara\kk = 0$.
\end{proof}

\begin{cor}
  Assume $\chara\kk=0$.
  If a toric variety $X_A \subset \PN$ is the join of some projective varieties,
  then
  $X_A$ is the join of some toric varieties $X_{A^0}, X_{A^1}, \dots, X_{A^{l}}$ for some $l >0$.
\end{cor}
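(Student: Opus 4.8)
The plan is to reduce this statement to \autoref{cor_join}; the only extra input needed is that a nontrivial join has positive Gauss defect. I read the hypothesis as asserting that $X_A$ is the join of at least two projective varieties $Y_1, \dots, Y_m$ (otherwise the statement is vacuous, every variety being the ``join'' of itself), and the conclusion as producing a genuine join of toric pieces, i.e.\ $l > 0$.

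The key step is to show $l := \delta_{\gamma}(X_A) > 0$, where $\gamma$ is the Gauss map of $X_A$. For this I would invoke Terracini's lemma exactly as recalled in the introduction (\cite[2.2.5]{FP}, \cite[Ch.\,II, 1.10.\,Proposition]{Zak}): since $\chara \kk = 0$, a general point $p$ of $X_A$ lies on a positive-dimensional linear variety $\Lambda_{y_1, \dots, y_m}$ spanned by points $y_i \in Y_i$, and the embedded tangent space $\TT_p X_A = \langle \TT_{y_1} Y_1, \dots, \TT_{y_m} Y_m \rangle$ depends only on $y_1, \dots, y_m$ and not on the position of $p$ inside $\Lambda_{y_1, \dots, y_m}$. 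Hence $\gamma$ is constant along $\Lambda_{y_1, \dots, y_m}$, so the Gauss map is degenerate and $l = \delta_{\gamma}(X_A) > 0$.

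With $l > 0$ in hand, I would simply apply \autoref{cor_join} to $X_A$: there exist disjoint torus invariant closed subvarieties $X_0, \dots, X_l \subset X_A$ whose join is $X_A$, and $l > 0$ makes this join nontrivial. Finally, tracing through the proof of \autoref{cor_join} via \autoref{thm:subvar-in-join} and \autoref{lem_torus_inv_sub}, these subvarieties are precisely the toric varieties $X_{A^0}, \dots, X_{A^l}$ attached to the summands $A^0, \dots, A^l$ of the Cayley sum presentation of $A$. Thus $X_A$ is the join of the toric varieties $X_{A^0}, \dots, X_{A^l}$ with $l > 0$, as required.

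The main obstacle is the degeneracy step rather than the toric bookkeeping: once $\delta_{\gamma}(X_A) > 0$ is known, the decomposition into toric factors is essentially free from \autoref{cor_join} and the Cayley sum machinery already developed. The real content is therefore the implication ``nontrivial join $\Rightarrow$ positive Gauss defect,'' which is exactly where the characteristic-zero hypothesis enters through Terracini's lemma; as noted in the introduction, this implication can fail in positive characteristic even when the Gauss map is separable, so the hypothesis $\chara \kk = 0$ cannot be dropped.
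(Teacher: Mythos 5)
Your proposal is correct and follows essentially the same route as the paper: the paper's own proof likewise observes that a join in characteristic zero has positive Gauss defect by Terracini's lemma and then applies \autoref{cor_join} with $l = \delta_{\gamma}(X_A)$. The extra detail you give on tracing the toric factors through \autoref{thm:subvar-in-join} is already implicit in \autoref{cor_join}, so nothing further is needed.
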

\begin{proof}
  Since $X_A$ is the join in $\chara\kk=0$,
  the Gauss defect $\delta_{\gamma}(X_A)$ is positive (due to Terracini's lemma). Hence this corollary follows from \autoref{cor_join}
  for $l = \delta_{\gamma}(X)$.
\end{proof}

The assumption $\chara\kk = 0$ is crucial in the above proof of \autoref{cor_join}.
In positive characteristic,
even if the Gauss map $\gamma$ of toric $X_A$ is separable (equivalently, 
a general fiber of $\gamma$ is scheme-theoretically an open subset of a linear variety of $\PN$),
it is possible that $\gamma$ is degenerate but $X_A$ is \emph{not} the join of any varieties, as follows.

\begin{ex}\label{thm:sep-gamma-X-not-join}
  Let $p = \chara\kk \geq 3$.
  Set
  \[
  A^0=\{0,1,-1\}, A^1 =\{ 0,p\} \subset \Z^1
  \]
  and let $A \subset \Z^1 \times \Z^1$ be the Cayley sum of $A^0,A^1$.
  Then
  \[
  \lin{B-B} = \Z^1 \times \{0\} \subset \Z^1 \times \Z^1.
  \]
  Hence $\pi: M \rightarrow M/ (\lin{B-B}_{\RR} \cap M)$ coincides with
  the projection $\Z^1 \times \Z^1 \rightarrow \ZZ^1$ to the second factor.
  In this setting, the following hold.

  \begin{enumerate}
  \item The Gauss map $\gamma$ of the surface $X_A \subset \PP^4$ is separable.
    A general fiber of $\gamma$ is a line; in particular,
    $\gamma$ is degenerate.

  \item The conic $X_{A^0}$ and the line $X_{A^1}$ are embedded into $X_A$.

  \item $X_A$ is of codimension one in the join of $X_{A^0}$ and $X_{A^1}$.
    On the other hand, $X_A$ itself is not the join of any varieties.
  \end{enumerate}

  The reason is as follows.
  \begin{inparaenum}
  \item The separability of $\gamma$ follows from \autoref{cor_rk&deg}.
    A general fiber of $\gamma$ is projectively equivalent to $X_{\pi(A)}$, which is a line.
  \item The embedding of $X_{A^j}$ is given as in 
    \autoref{lem_torus_inv_sub}.
  \item It follows from \autoref{thm:subvar-in-join} that
    $X_A$ is contained in the join of $X_{A^0}$ and $X_{A^1}$.
    Since the join is of dimension $3$,
    the codimension of $X_A$ in the join is equal to $1$. By \autoref{thm:subvar-in-join:rem}, the codimension is also calculated from
    \begin{gather*}
      \rk \Aff (A^0) - \dim \Aff_{\kk} (A^0) = 1-1 = 0,
      \\
      \rk \Aff (A^1) - \dim \Aff_{\kk} (A^1) = 1-0 = 1.
    \end{gather*}
    On the other hand, $X_A$ is not the join of any varieties;
    this is because,
    a projective surface $X \subset \PN$ is the join of some varieties
    if and only if $X$ is the cone of a curve with a vertex.
  \end{inparaenum}

 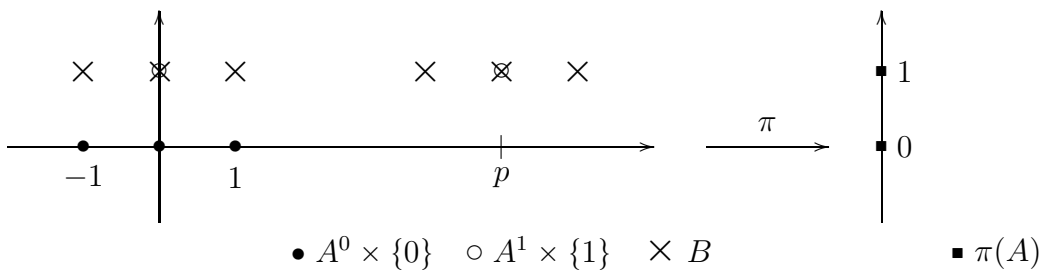
\begin{figure}[htbp]
    \[
    \begin{xy}
      (10,0)="A",(0,10)="B",
      (-10,-10)="C",
      (9.9,0.1)="E",(-5,0.1)="F",
      (9.8,-0.1)="I",(-5,-0.1)="J",
      (-20,0)="1",(65,0)="2",
      (0,-10)="3",(0,18)="4",
      (95,-10)="7",(95,18)="8",
      (10,-4)*{1},
      (-10,-4)*{-1},
      (45,0)*{\mbox{\scriptsize$\mid$}},
      (45,-4)*{p},
      (0.1,10)*{\mbox{\Large$\times$}},
      (-10,10)*{\mbox{\Large$\times$}},
      (10,10)*{\mbox{\Large$\times$}},
      (35,10)*{\mbox{\Large$\times$}},
      (45,10)*{\mbox{\Large$\times$}},
      (55,10)*{\mbox{\Large$\times$}},
      (45,10)*{\mbox{\large$\circ$}},
      (0,10)*{\mbox{\large$\circ$}},
      (0,0)*{\bullet},
      (-10,0)*{\bullet},(10,0)*{\bullet},
      (95,10)*{\sqbullet},(95,0)*{\sqbullet},
      (98,10)*{1},(98,0)*{0},
      (45,-14)*{
        \bullet\;A^0 \times \set{0}\quad 
        \mbox{\large$\circ$}\;A^1 \times \set{1}\quad 
        \mbox{\Large$\times$}\;B
      }, 
      (110,-14)*{\sqbullet\;\pi(A)},
      (80,3)*{\pi}
      \ar "1";"2"
      \ar "3";"4"
      \ar "7";"8"
      \ar (72,0);(88,0)
    \end{xy}
    \]
    \caption{Cayley sum: $A = A^0 * A^1$ \ ($p \geq 3$).}
    \label{figure3}
  \end{figure}

\end{ex}

\section{Constructions in positive characteristic}
\label{sec:posit-char-case}

This section presents two constructions of projectively embedded toric varieties
in positive characteristic.
We consider whether it is possible to find a toric variety
whose Gauss map $\gamma$ has given data about
\begin{enumerate}
\item[{(F)}]   each irreducible component of a general fiber of $\gamma$;
\item[{(I)}]   the image of $\gamma$;
\item[{(c)}]
  the number of the irreducible components of a general fiber of $\gamma$;
\item[{(r)}]
  the rank of $\gamma$.
\end{enumerate}
The statement of \autoref{thm_Fukasawa1}
means that, a projectively embedded toric variety $X$ is constructed for given (F) and (I).
In fact,
in the construction of \autoref{cor_im&fiber}, we can control (F), (I), and (c),
but not (r) (indeed, $\rk (\gamma) = 0$ for $X$ in our proof).
On the other hand, in the construction of \autoref{cor_rk&fiber},
we can control (F), (r), and (c).
Hereafter we assume that $p= \chara \KK$ is positive.

\begin{thm}\label{cor_im&fiber}
  Assume $p = \chara \KK >0$.
  Let $A'$ and $A''$ be finite subsets of free abelian groups $M'$ and $M''$
  respectively
  such that $\Aff (A') = M'$ and $ \Aff (A'')=M''$.
  Let $c > 0$ be an integer coprime to $p$.
  Assume $n:= \rk(M') + \rk(M'') \geq \#A'-1$ and $\rk(M'') \geq 1$.
  Then there exists a finite subset $A \subset M := \Z^n$ with $\# A = n + \# A''$ and $\Aff (A)= M$
  such that the Gauss map $\gamma$ of $X_A \subset \PP^{\#A-1}$ satisfies the following conditions:
  \begin{enumerate}[\normalfont(i)]
  \item (The closure of) each irreducible component of a general fiber of $\gamma$
    is projectively equivalent to $X_{A'}$.
  \item (The closure of) the image $\gamma (X_A)$ is projectively equivalent to $X_{A''}$.
  \item The number of the irreducible components of a general fiber of $\gamma$ is equal to $c$.
  \end{enumerate}
\end{thm}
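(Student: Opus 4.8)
The plan is to realise the prescribed Gauss map by an explicit combinatorial construction of $A$ and then read off (i)--(iii) from \autoref{thm_structure} and \autoref{cor_rk&deg}. Write $r' = \rk M'$ and $r'' = \rk M''$, so $n = r' + r''$, and identify $M = \Z^n$ with $\Z^{r'} \times \Z^{r''}$; let $\pi \colon M \to \Z^{r'}$ be the first projection, so $\ker \pi = \{0\} \times \Z^{r''}$. I want $\pi$ to be exactly the structure map of \autoref{thm_structure}, so that a general fibre is $X_{\pi(A)}$ and the image is $X_B$ with $\lin{B-B} \subset \ker\pi$. Accordingly I arrange $\pi(A) = A'$ (identifying $M'$ with the first factor) and force $B$ to be an affine copy of $A''$ inside a coset of $\ker\pi$.

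Concretely, index the fibres by $A' = \{v_0, \dots, v_{N'}\}$ (with $v_0 = 0$ and $N' = \#A'-1$) and set $A_j := \pi^{-1}(v_j) \cap A$. Over $v_0$ I put the $A''$-data: fixing a diagonal integer matrix $D = \operatorname{diag}(c\,p^m, p^m, \dots, p^m)$ with $m \geq 1$, I let $A_0 = \{0\} \times D(A'' - w_0)$, where $A'' = \{w_0, \dots, w_{N''}\}$. Every point of $A_0$ is $\equiv 0 \bmod p$, so $\dim \Aff_\KK(A_0) = 0$. The remaining $n$ points I distribute among $A_1, \dots, A_{N'}$, putting at least one and at most $r''+1$ in each (feasible because $n \geq N'$ and $r'' \geq 1$), and I choose the free $\Z^{r''}$-coordinates in general position so that each $A_j$ ($j \geq 1$) is $\KK$-affinely independent, so that $A$ spans $M$ as an affine lattice, and so that $A_1 \cup \dots \cup A_{N'} \cup \{0\}$ spans $M_\KK$ affinely. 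Then $\sum_{j=0}^{N'} \dim \Aff_\KK(A_j) = 0 + \sum_{j \geq 1}(\#A_j - 1) = n - N'$, so by \autoref{main_thm} the projection $\pi$ factors through the structure map; equivalently, every spanning $(n+1)$-subset contains all of $A_1 \cup \dots \cup A_{N'}$ and meets $A_0$ in exactly one point.

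From this rigid pattern I compute $B$ directly: a spanning subset is $A_1 \cup \dots \cup A_{N'} \cup \{(0, D(w_k - w_0))\}$, which spans $M_\KK$ for every $k$ because $D(w_k - w_0) \equiv 0 \bmod p$, so the relevant determinant modulo $p$ does not depend on $k$. Its sum is $b_k = (X_0, Y_0 + D(w_k - w_0))$ for constants $X_0, Y_0$, whence $B = \{b_0, \dots, b_{N''}\}$ and $\lin{B-B} = \{0\} \times D\,\Z^{r''}$; the affine isomorphism $D^{-1}$ (after translation) identifies $(B, \lin{B-B})$ with $(A'', M'')$, so $X_B \cong X_{A''}$ by \autoref{thm:span-A-M}, giving (ii). Since $\lin{B-B}$ has full rank $r''$ in the saturated summand $\ker\pi$, we get $\lin{B-B}_\R \cap M = \ker\pi$; thus $\pi$ is the structure map and a general fibre is $X_{\pi(A)} = X_{A'}$, giving (i). Finally $[\lin{B-B}_\R \cap M : \lin{B-B}] = [\Z^{r''} : D\,\Z^{r''}] = \det D = c\,p^{m r''}$, whose prime-to-$p$ part is $c$; by \ref{item:rkdeg-deg} of \autoref{cor_rk&deg} the number of irreducible components of a general fibre is $c$, giving (iii). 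One also checks $\rk(\gamma) = \dim \Aff_\KK(B) = 0$, since $\lin{B-B} \subset pM$.

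The main obstacle is exactly the component count in (iii): a naive uniform scaling $B = c\,p^m A''$ would force $\lin{B-B} = c\,p^m M''$ of index $(c\,p^m)^{r''}$, yielding $c^{r''}$ components rather than $c$. The key is to decouple the affine-lattice isomorphism type of $(B, \lin{B-B}) \cong (A'', M'')$ (which governs the image) from the way $\lin{B-B}$ sits inside its saturation $\ker\pi$ (which governs the component count), and the diagonal matrix $D$ does precisely this, concentrating the single prime-to-$p$ factor $c$ in one coordinate. The remaining technical point is the simultaneous general-position choice of the free fibre coordinates making the spanning, $\KK$-affine independence, and integral lattice-generation conditions hold at once, which is routine once $r'' \geq 1$; the degenerate case $r' = 0$ (where $A'$ is a point and $\gamma$ is generically finite) is handled by the same idea with a single Kaji-type cluster in place of the fibres.
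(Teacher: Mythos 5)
Your construction is essentially the paper's, rewritten in split coordinates. The paper also arranges $\pi(A)=A'$ for a surjection $\pi\colon \Z^n\to M'$, places a $p$-divisible copy of $A''$ in the fiber over $0$, and uses the observation that at most one point of that cluster can occur in an affinely spanning $(n+1)$-subset of $A$, so that $B$ becomes an affine copy of $A''$ and $\lin{B-B}$ sits inside $\ker\pi$ with the right index; your diagonal matrix $D=\operatorname{diag}(cp^m,p^m,\dots,p^m)$ plays exactly the role of the chain $p\,M''\subset M''\subset\ker\pi$ with $[\ker\pi:M'']=c$ in the paper, and the prime-to-$p$ part of the index is $c$ in both versions. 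So the idea, the appeal to \autoref{thm_structure} and \autoref{cor_rk&deg}, and the way (i)--(iii) are extracted all coincide.

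The one substantive difference is where the work is hidden. The paper takes the $n$ ``frame'' points to be the standard basis $e_1,\dots,e_n$ of $\Z^n$ and \emph{defines} $\pi$ on that basis, so $\Aff(A)=M$, the identification of all spanning subsets, and the needed $\KK$-affine independence are immediate. You instead fix $\pi$ as a coordinate projection and must distribute $n$ integer points among the fibers over $v_1,\dots,v_{N'}$ so that simultaneously each $A_j$ ($j\geq 1$) is $\KK$-affinely independent, $A_1\cup\dots\cup A_{N'}\cup\{0\}$ spans $M_\KK$, and $\Aff(A)=M$; you assert this is routine but do not verify it, and it is the only nontrivial point of the whole construction. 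It can be done: take the surjection $\psi\colon\Z^n\to\Z^{r'}$ sending the standard basis to your prescribed multiset of $v_j$'s, split the exact sequence $0\to\ker\psi\to\Z^n\to\Z^{r'}\to 0$, and let the $n$ points be the images of the standard basis under the resulting isomorphism $\Z^n\cong\Z^{r'}\times\Z^{r''}$ --- but that is precisely the paper's construction in disguise, so you should either carry this out or adopt the basis formulation directly. A second, smaller issue: when $\rk M'=0$ (so $A'$ is a single point and there are no fibers $A_1,\dots,A_{N'}$) your main construction has nowhere to put the $n$ frame points, and the ``Kaji-type cluster'' remark does not actually supply an argument; the basis formulation covers this case with no change.
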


\newcommand{\sAdm}{N'}\newcommand{\sAddm}{N''}

\begin{proof}We set $\sAdm = \# A'-1$
  and $A'=\{ u'_0,\ldots, u'_{\sAdm}\}$,
  and let $e_1, \ldots, e_{n}$
  be the standard basis of $M = \Z^{n}$.
  Without loss of generality,
  we may assume that $u'_0=0$.
  We define a group homomorphism $\pi$ by
  \begin{align*}
    \pi :  M \arw M' \quad: \quad   e_i \mapsto \left\{ 
      \begin{array}{cl} 
        u'_i & \text{for } 1 \leq i \leq \sAdm ,\\ 
        0 & \text{for }  \sAdm +1 \leq i \leq n .\\
      \end{array} \right.
  \end{align*}
  We note that $n \geq \sAdm$ holds by assumption.
  Since $\Aff (A') = M'$ and $u'_0=0 \in M'$,
  $\pi$ is surjective.
  Hence $\ker \pi$ is a free abelian group whose rank is $\rk (M'')$.
  Since $\rk(M'') \geq 1$, we can take and
  fix an injective group homomorphism
  \[
  M'' \hookrightarrow \ker \pi
  \]
  whose cokernel is isomorphic to $\Z / \lin{c}$. 
  Let $A''=\{ f_0,\ldots, f_{\sAddm}\} \subset M'' \subset \ker \pi$ for $\sAddm = \# A'' -1$.
  Without loss of generality,
  we may assume that $f_0=0 \in M''$.
  Set
  \[
  A =\{ e_1,\ldots, e_{n}, p f_0, \ldots, p f_{\sAddm} \}  \subset M.
  \]
  Since $e_1, \ldots, e_{n} $ is a basis of $M$ and $ p f_0 =0 \in M$,
  $A$ spans the affine lattice $M$.

  Let $B$ be as in the statement of Theorem \ref{thm_structure} for the above $A$.
  Choose $n+1$ elements $u_{i_0}, u_{i_1} , \ldots , u_{i_{n}} \in A$ which span the affine space $M_{\KK}$.
  Since $p f_s=0 $ in $M_{\KK} $ for $0 \leq s \leq \sAddm$,
  at most one element of $\{p f_0, p f_1,\ldots, p f_{\sAddm}  \}$ is contained in $\{u_{i_0}, u_{i_1} , \ldots , u_{i_{n}}\}$.
  Hence $\{u_{i_0}, u_{i_1} , \ldots , u_{i_{n}}\} =\{ p f_s, e_1,\ldots, e_{n} \}$ holds for some $0 \leq s \leq \sAddm $.
  Thus we have
  \[
  B = \{ p f_0 + e_1+ \cdots +e_{n} ,\ldots, p f_{\sAddm} +e_1+ \cdots +e_{n}  \},
  \]
  that is, $B $ is the parallel translation of $p \cdot A'' $ by $e_1+ \cdots +e_{n}$.
  Hence we have $X_B = X_{p \cdot A''} = X_{A''}$.
  Since the closure of the image $\gamma(X_A)$ is projectively equivalent to $X_B$ by \ref{thm:item-image} in Theorem \ref{thm_structure},
  the condition~(ii) in this theorem holds.

  Since $A''= \{f_0, \ldots, f_{\sAddm} \}$ spans the affine lattice $M''$,
  it holds that
  \[
  \langle B -B \rangle = p \cdot \langle A''-A'' \rangle =p \cdot M'' \subset M'' \subset \ker \pi.
  \]
  Therefore $\langle B-B \rangle_{\R} \cap M= \ker \pi$
  and the natural projection $M \arw M / (\langle B-B \rangle_{\R} \cap M)$ coincides with $\pi : M \arw M'$.
  Since $\pi(A) =A'$ by the definition of $\pi$ and $A$,
  the condition~(i) in this theorem follows from \ref{thm:item-fiber} in Theorem \ref{thm_structure}.

  Since $ \ker \pi / M'' \simeq \Z / \lin{c}$,
  the order of the finite group
  \[
  (\langle B-B \rangle_{\R} \cap \Z^{n}) / \langle B-B \rangle = \ker \pi / (p \cdot M'')
  \]
  is $p^{\rk(M'')} c$.
  Hence (iii) in this theorem follows from (2) in \autoref{cor_rk&deg}.
\end{proof}

Note that, in the above construction, $\rk (\gamma) = \dim\lin{B-B}_{\kk} = 0$.

\begin{ex}

  In this example,
  we illustrate \autoref{cor_im&fiber} for 
  \[
  A' = \Set{
    \begin{bmatrix}
      0 \\ 0
    \end{bmatrix},
    \begin{bmatrix}
      1 \\ 0
    \end{bmatrix},
    \begin{bmatrix}
      0 \\ 1
    \end{bmatrix},
    \begin{bmatrix}
      1 \\ 1
    \end{bmatrix}
  } \subset \ZZ^2,\
  A'' = \set{0, 1, 2, 3} \subset \ZZ^1 
  \]
  and an integer $c >0$ coprime to $p = \chara \KK$.
  In this case, $X_{A'} = \PP^1 \times \PP^1 \subset \PP^3$ is a smooth quadric surface
  and $X_{A''} \subset \PP^3$ is a twisted cubic curve. 
  Since $n= 2 +1 \geq \# A' -1 =4-1$,
  we can apply \autoref{cor_im&fiber}.

  We use the notations in the proof of \autoref{cor_im&fiber}.
  Since $\pi : M = \Z^3 \arw M' = \Z^2$ is defined by
  \[
  \pi(e_1)=
  \begin{bmatrix}
    1 \\ 0
  \end{bmatrix}, \quad
  \pi(e_2)=
  \begin{bmatrix}
    0 \\ 1
  \end{bmatrix}, \quad
  \pi(e_3)=
  \begin{bmatrix}
    1 \\ 1
  \end{bmatrix}
  \]
  for the standard basis $e_1,e_2,e_3$ of $\Z^3$,
  $\ker \pi $ is generated by $e_1 +e_2 -e_3$.
  Hence an injection $M'' =\Z^1 \hookrightarrow \ker \pi$ with cokernel $\Z / \lin{c}$ is given by mapping $1 \in \Z^1$ to $ c (e_1 +e_2 -e_3)$.
  Thus
  $A$ in the proof of \autoref{cor_im&fiber} is 
  \[
  A = \Set{
    \begin{bmatrix}
      1 \\ 0 \\ 0
    \end{bmatrix},
    \begin{bmatrix}
      0 \\ 1 \\ 0
    \end{bmatrix},
    \begin{bmatrix}
      0 \\ 0 \\ 1
    \end{bmatrix},
    p \cdot 0 ,
    p \cdot  f,
    p \cdot  2  f,
    p \cdot 3  f
  }
  \text{ for }
  f= \begin{bmatrix}
    c \\ c \\ -c
  \end{bmatrix}.
  \]

  \vspace{2mm}
  We can see directly that (i) - (iii) in \autoref{cor_im&fiber} hold for this $A$ as follows:
  In this case,
  $X_A$ is the image of 
  $\phi_A: (\Kx)^3 \hookrightarrow \PP^{6}$ defined by 
  \begin{align}\label{eq_varphi}
    (x,y,z) \mapsto [x : y : z : 1 : (xyz^{-1})^{pc} : (xyz^{-1})^{2pc} : (xyz^{-1})^{3pc}].
  \end{align}
  We embed $\PP^3 $ into $ \Gr (3,\PP^6)$ by mapping $[X:Y:Z:W] \in \PP^3$ to the $3$-plane in $\PP^6$ spanned by the $4$ points which are given as the row vectors of
  \begin{align*}
    \begin{bmatrix}
      0 & 0 & 0 & X & Y & Z & W 
      \\
      1 & 0 & 0 & 0 & 0 & 0 & 0 
      \\
      0 & 1 & 0 & 0 & 0 & 0 & 0 
      \\
      0 & 0 & 1 & 0 & 0 & 0 & 0 
    \end{bmatrix}.
  \end{align*}
  Then the image of $\varphi_A(x,y,z)$  by the Gauss map $\gamma$ of $X_A$ is
  \begin{align}\label{eq_image}
    [1 : (xyz^{-1})^{pc} : (xyz^{-1})^{2pc} : (xyz^{-1})^{3pc}] \in \PP^3 \subset \Gr(3, \PP^6)
  \end{align}
  from \autoref{thm:matrix-Gamma}.
  Hence the closure $\overline{\gamma (X_A)}$ is the twisted cubic curve $X_{A''} \subset \PP^3$.
  Thus (ii) holds.

  From \ref{eq_varphi} and \ref{eq_image},
  the fiber of $\gamma |_{T_M}$ over $[1:1:1:1] \in \overline{\gamma(X_A)}= X_{A''} \subset \PP^3$ is
  \[
  \left\{ [x : y : z : 1 : 1 : 1 : 1] \in X_A \subset \PP^6 \, | \, (x,y,z ) \in (\Kx)^3, (xyz^{-1})^{pc} =1 \right\}.
  \]
  As a set,
  this is the disjoint union of
  \begin{align}\label{eq_fib_comp}
    \left\{ [s : t : \zeta^k st : 1 : 1 : 1 : 1] \in X_A \subset \PP^6 \, | \, (s,t ) \in (\Kx)^2 \right\}
  \end{align}
  for $0 \leq k \leq c-1$, where $\zeta \in \Kx$ is a primitive $c$-th root of unity.
  Since the closure of each \ref{eq_fib_comp} is projectively equivalent to $X_{A'}= \PP^1 \times \PP^1 \subset \PP^3$,
  (i) and (iii) are satisfied.
\end{ex}

Next, we consider how to construct $X$ for a given integer $r > 0$ such that the rank of the Gauss map of $X$
is equal to $r$.
From the following remark, we need to assume $r \neq 1$ if $\chara\kk = 2$.

\begin{rem}\label{rem_rk_in_char2}
  In characteristic $2$,  it is known that
  the rank of the Gauss map of any projective variety $X \subset \PN$ \emph{cannot} be equal to $1$.
  In addition, if $X$ is a hypersurface, then the rank of the Gauss map is \emph{even}.
  The reason is as follows.
  
  Let $X \subset \PN$ be a projective variety in $\chara\kk = 2$,
  and let $x \in X$ be a general point.
  As in \cite[\textsection{}2]{fukaji2010},
  choosing homogeneous coordinates on $\PN$, we may assume that
  $X$ is locally parameterized at $x = [1:0:\dots:0]$
  by $[1:z_1: \dots: z_n: f_{n+1}: \dots: f_{N}]$,
  where $z_1, \dots, z_n$ form a regular system of parameters of $\sO_{X,x}$, and
  $f_{n+1}, \dots, f_N \in \sO_{X, x}$.
  Then
  $\rk d_{x}\gamma$
  is equal to the rank of the $n \times (n(N-n))$ matrix
  \begin{equation}\label{eq:Hessians}
    \begin{bmatrix}
      H(f_{n+1}) & H(f_{n+2}) & \cdots & H(f_{N})
    \end{bmatrix},
  \end{equation}
  where
  $H(f) :=
  [\partial^2 f / \partial z_i \partial z_j]_{1 \leq i,j \leq n}
  $
  is the Hessian matrix of a function $f$.
  Assume that $\rk (\gamma)$ ($= \rk d_x \gamma$) is nonzero.
  Then the matrix \ref{eq:Hessians} is nonzero; in particular,
  one of the Hessian matrix $H(f_{k})$ is nonzero.
  Since $\chara\kk = 2$, we have $\partial^2 f_k / \partial z_i\partial z_i = 0$,
  i.e., the diagonal entries of $H(f_{k})$ are zero. Hence 
  some $\partial^2 f_k / \partial z_i\partial z_j$ with $i \neq j$ must be nonzero.
  Therefore $H(f_{k})$ has $2\times 2$ submatrix
  \[
  \begin{bmatrix}
    \partial^2 f_k / \partial z_i\partial z_i & \partial^2 f_k / \partial z_j\partial z_i
    \\
    \partial^2 f_k / \partial z_i \partial z_j & \partial^2 f_k / \partial z_j\partial z_j
  \end{bmatrix}
  =
  \begin{bmatrix}
    0 & \partial^2 f_k / \partial z_i\partial z_j
    \\
    \partial^2 f_k / \partial z_i \partial z_j & 0
  \end{bmatrix},
  \]
  whose determinant is nonzero. This implies that $\rk (\gamma) \geq 2$.

  Now assume that $X \subset \PN$ is a hypersurface.
  Then $X$ is locally parametrized by $[1:z_1:\dots:z_n:f_{n+1}]$,
  and hence $\rk d_x \gamma = \rk H(f_{n+1})$.
  Since $\chara\kk = 2$, the symmetric matrix $H(f_{n+1})$ is skew-symmetric,
  whose rank is even
  (for example, see \cite[\textsection{}5, n$^\text{o}$1, Corollaire 3]{bourbaki}).

\end{rem}

\begin{thm}\label{cor_rk&fiber}
  Assume $p = \chara \KK > 0$.
  Let $A'$ be a finite subset of a free abelian group $M'$ with $\Aff (A') = M'$.
  Let $ r,c > 0$ be positive integers such that  $(p,r) \neq(2,1)$ and $c$ is coprime to $p$.
  Assume that positive integers $n, N$ satisfy
  \[
  n \geq \max \{  (\# A' -1) +r ,  \rk (M') + r +1\}
  \]
  and
  \begin{align}\label{cond_N}
    N \geq  \left\{ 
      \begin{array}{cl} 
        2n-\rk(M')-r+1 & \text{if } p \geq 3, \text{ or } \ p=2, r : \text{even} ,\\ 
        2n-\rk(M')-r+2 & \text{if } p=2, r : \text{odd} .\\ 
      \end{array} \right.
  \end{align}
  Then there exists a finite subset $A \subset M:= \Z^{n}$
  with $\Aff (A)= M$ and $\# A =N+1$
  such that the Gauss map $\gamma$ of $X_A \subset \PN$ satisfies the following conditions:
  \begin{enumerate}[\normalfont(i)]
  \item (The closure of) each irreducible component of a general fiber of $\gamma$
    is projectively equivalent to $X_{A'}$.
  \item The rank of $\gamma$ is equal to $r$.
  \item The number of the irreducible components of a general fiber of $\gamma$ is equal to $c$.
  \end{enumerate}
\end{thm}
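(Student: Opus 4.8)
The plan is to reduce the statement, via \autoref{thm_structure} and \autoref{cor_rk&deg}, to a purely combinatorial construction of $A$, and then to assemble $A$ from one–dimensional ``Kaji blocks'' of the type in \autoref{thm:kaji's-ex}. Write $m=\rk(M')$, $N'=\#A'-1$, and $q:=n-m-r$; the hypotheses $n\ge\max\{N'+r,\,m+r+1\}$ give $q\ge1$ and $n\ge N'+r$. Fix a splitting of $0\to L\to M=\Z^{n}\xrightarrow{\pi}M'\to0$ with $L:=\ker\pi\cong\Z^{\,n-m}$. By \autoref{thm_structure}, once $A$ is produced with $\pi(A)=A'$ and $\lin{B-B}_{\R}\cap M=L$, a general fibre of $\gamma$ has closure $\cong X_{\pi(A)}=X_{A'}$, giving (i); and by \autoref{cor_rk&deg} the rank of $\gamma$ is $\dim\lin{B-B}_{\kk}$ while the number of components is the prime-to-$p$ part of $[L:\lin{B-B}]$. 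Hence it suffices to arrange that $\lin{B-B}\subset L$ is a full–rank sublattice with elementary divisors $\underbrace{1,\dots,1}_{r},\,\underbrace{p,\dots,p}_{q-1},\,pc$, so that $\dim\lin{B-B}_{\kk}=r$ and $[L:\lin{B-B}]=p^{\,q}c$ has prime-to-$p$ part $c$.

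I would build $A$ as a Cayley–type configuration over $A'$, that is $A=\bigcup_{j=0}^{N'}(A^{j}\times\{u'_{j}\})$ with $A^{j}\subset L$ and $\pi(A)=A'=\{u'_0,\dots,u'_{N'}\}$, chosen so that $\sum_j\dim\Aff_{\kk}(A^{j})=n-N'$. Then \autoref{main_thm} shows $\pi$ factors through the Gauss projection, and \autoref{thm:B-B-wr-Aj-Aj} gives $\lin{B-B}\subset\lin{\bigcup_j(A^{j}-A^{j})}\subset L$; one checks that in fact $\lin{B-B}=\sum_j\Lambda_j$, where $\Lambda_j$ is generated by differences of sums of $(\dim\Aff_{\kk}(A^{j})+1)$–element subsets of $A^{j}$ spanning $\Aff_{\kk}(A^{j})$. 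Each $A^{j}$ is a product of independent one–dimensional gadgets placed in distinct coordinate directions of $L$. As in \autoref{thm:kaji's-ex}, a block $\{0,1,a,a+1\}$ has $\lin{B-B}=\langle a\rangle$ when $p\mid a$ and $\lin{B-B}=\Z$ when $p\nmid a$; thus $a$ prime to $p$ yields elementary divisor $1$ (adding one to the rank), $a=p$ yields $p$, and $a=pc$ yields $pc$, while the two–point block $\{0,p\}$ has $\dim\Aff_{\kk}=0$ and contributes $\langle p\rangle$ without counting toward $\sum_j\dim\Aff_{\kk}(A^{j})$. Using $r$ rank–blocks, $q-1$ blocks of divisor $p$ and one of divisor $pc$ in the $n-m$ directions of $L$ produces exactly the target elementary divisors; the freedom between the $(\dim\Aff_{\kk}=1)$ and $(\dim\Aff_{\kk}=0)$ variants lets me meet the developability identity $\sum_j\dim\Aff_{\kk}(A^{j})=n-N'$ (here the bound $n\ge N'+r$ guarantees enough $\dim\Aff_{\kk}=1$ blocks), while the combined $\Z$–ranks fill $L$. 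Extra pairs $\{kp,kp+1\}$ may be appended to an inseparability block without changing its lattice, so $\#A$ can be made equal to $N+1$ for any admissible $N$.

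The main obstacle is to verify that the only $(n+1)$–subsets of $A$ spanning $M_{\kk}$ are the rigid ones that take the forced number of points from each slice and each gadget, so that the gadget lattices decouple into the direct sum $\bigoplus\Lambda_{\mathrm{gadget}}$ and no spurious elementary divisor $1$ appears that would push the rank above $r$. This is the combinatorial heart, in the spirit of the computations in \autoref{prep-cor_rk&fiber} and the proof of \autoref{cor_im&fiber}, and it requires a within–slice decoupling statement beyond the across–slice decoupling supplied by \autoref{thm:B-B-wr-Aj-Aj}.

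The second difficulty is characteristic $2$. There the rank–block $\{0,1,2,3\}$ degenerates (its divisor becomes $2=p$), and by \autoref{rem_rk_in_char2} rank cannot be created one unit at a time. I would instead generate rank in even amounts using two–dimensional gadgets with $\lin{B-B}=\Z^{2}$, available from \autoref{prep-cor_rk&fiber} since a plane configuration can have birational Gauss map in characteristic $2$; this settles even $r$. For odd $r$ — necessarily $r\ge3$, as $(p,r)\neq(2,1)$ — a single odd-rank gadget replaces one rank–$2$ gadget, which is possible precisely because $r\ge3$ and which costs exactly one additional point; this is the source of the ``$+2$'' rather than ``$+1$'' in the bound \eqref{cond_N}. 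A final tally of $\#A$ against \eqref{cond_N} completes the argument.
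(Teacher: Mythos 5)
Your reduction of the theorem to a lattice computation is exactly right and matches the paper: produce $A$ with $\pi(A)=A'$ and $\lin{B-B}_{\R}\cap M=\ker\pi$, then read off (i) from \autoref{thm_structure}, (ii) from $\rk(\gamma)=\dim\lin{B-B}_{\kk}$, and (iii) from the prime-to-$p$ part of $[\ker\pi:\lin{B-B}]$ via \autoref{cor_rk&deg}; your target elementary divisors $1^{r},p^{q-1},pc$ are the correct ones. But your construction diverges from the paper's at the point where the work actually happens, and it has two genuine problems. First, the step you yourself flag --- that $\lin{B-B}$ decouples as the direct sum of the per-gadget lattices, with no spurious divisor-$1$ contributions from spanning $(n+1)$-subsets that mix points of different gadgets --- is not a formality but the entire content of the computation, and you do not supply it. A cross-gadget element of $B-B$ such as $f_{1}-f_{2}$ (rather than $pf_{1}-pf_{2}$) would raise $\dim\lin{B-B}_{\kk}$ above $r$ and destroy (ii). The paper avoids this by a rigidity argument: its set $A=C\cup D$ contains the full basis $e_{1},\dots,e_{n}$ together with points ($0$, $cpf_{1}$, $pf_{2},\dots$) that all vanish in $M_{\kk}$, so any affinely spanning $(n+1)$-subset is forced to contain at most one of the latter, and $\lin{B-B}$ is computed explicitly as in \ref{eq_B-B} using \autoref{prep-cor_rk&fiber} and \autoref{ex_char2}. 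Your blocks $\{0,f,af,(a+1)f\}$ do not have this ``all collapse to $0$ in $M_{\kk}$'' feature, so the rigidity argument does not transfer and a genuinely new decoupling lemma would be needed.

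Second, even granting decoupling, your architecture cannot reach the minimal $N$ in \ref{cond_N}, so it does not prove the statement as written. Building the rank one coordinate direction at a time costs at least two points per unit of rank beyond a shared origin (a divisor-$1$ block in a single direction needs at least three points, e.g.\ $\{0,f,af\}$), hence roughly $2r$ points for the rank part, whereas the paper achieves rank $r$ with only $r+|D|\leq r+2$ points by using a \emph{single} $r$-dimensional birational configuration $\{0,e_{N'+1},\dots,e_{N'+r}\}\cup D$ from \autoref{prep-cor_rk&fiber} (or \autoref{ex_char2} when $p=2$ and $r$ is odd): one extra point buys all $r$ units of rank at once. For $r\geq 2$ your minimal $\#A$ exceeds $2n-\rk(M')-r+2$, and your padding trick only moves $N$ upward, so the case $N=2n-\rk(M')-r+1$ is out of reach. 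The fix is essentially to merge your $r$ rank-gadgets into the paper's single higher-dimensional one, at which point you have reconstructed the paper's proof.
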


\begin{proof}
  We set $n' = \rk(M')$ and  $\sAdm = \# A'-1$,
  and $A'=\{ u'_0,\ldots, u'_{\sAdm}\}$.
  Let $e_1, \ldots, e_{n}$ be the standard basis of $M=\Z^{n}$.
  Without loss of generality,
  we may assume that $u'_0=0$.
  As in the proof of \autoref{cor_im&fiber},
  we define a surjective group homomorphism $\pi : \Z^n \arw M' $
  by $\pi(e_i) = u'_i$ for $1 \leq i \leq \sAdm$ and $\pi(e_i)=0$ for $\sAdm+1 \leq i \leq n$.
  Since $\ker \pi \simeq \Z^{n-n'}$ and $e_{\sAdm+1} , \ldots, e_{N' + r} \in \ker \pi$
  (note that $\sAdm +r = (\# A' -1) +r \leq n$ holds by assumption),
  there exist
  \[
  f_1, \ldots, f_{n-n'-r} \in \ker \pi
  \]
  such that $e_{N'+1} , \ldots, e_{N'+r}, f_1, \ldots, f_{n-n'-r} $
  form a basis of $\ker \pi$.
  By assumption,
  $n-n' -r = n - \rk(M') -r \geq 1$ holds.

  \vspace{1mm}
  First,
  we consider the case
  when $N$ is equal to the right hand side of \ref{cond_N}.
  Set
  \[
  A = C \cup D \subset M,
  \]
  where
  \begin{align*}
    C &= \{ e_1,\ldots,e_n, 0, c pf_1, p f_2, \ldots, p f_{n-n'-r}\},
    \\
    D &= \left\{ 
      \begin{array}{cl} 
        \{e_{\sAdm+1} + \cdots + e_{\sAdm+r} \} & \text{for } r \not \equiv 1 \text{ mod } p ,\\ 
        \{- e_{\sAdm+1} - \cdots - e_{\sAdm+r} \} & \text{for } r \equiv 1 , r \not \equiv - 1 \text{ mod } p ,\\ 
        \{e_{\sAdm+1} + e_{\sAdm+2}, e_{\sAdm+2} + \cdots + e_{\sAdm+r} \} & \text{for }  p=2, r : \text{odd}, r \geq 3.
      \end{array} \right.
  \end{align*}
  By a similar argument in the proof of \autoref{cor_im&fiber} and by Examples \ref{prep-cor_rk&fiber}, \ref{ex_char2},
  we have
  \begin{align}\label{eq_B-B}
    \langle B-B \rangle = \bigoplus_{i=\sAdm+1}^{\sAdm+r}  \Z e_{i} \oplus \Z cp f_1 \oplus \bigoplus_{j=2}^{n-n'-r}  \Z p f_j  .
  \end{align}
  Hence 
  $\langle B-B \rangle_{\R} \cap M= \ker \pi$.
  Since $A' = \pi (A)$,
  (i) and (iii) in this corollary follows as in \autoref{cor_im&fiber}.

  Since $e_{N'+1} , \ldots, e_{N'+r}, f_1, \ldots, f_{n-n'-r} $ form a basis of $\ker \pi$,
  we have $\langle B-B \rangle_{\KK} =\bigoplus_{i=\sAdm+1}^{\sAdm+r}  \KK e_{j} $.
  Thus the rank of $\gamma $ is $\dim \langle B-B \rangle_{\KK} = r$ by \autoref{cor_rk&deg}.

  \vspace{3mm}
  Next, we consider any integer $N$ satisfying the inequality~\ref{cond_N}.
  We take
  a finite subset $E$ of the right hand side of \ref{eq_B-B} such that
  $N = \# (C \cup D \cup E) -1$
  for the above $C$ and $D$.
  Set
  $A := C \cup D \cup E \subset M$.
  Since
  $E$ is contained in the right hand side of \ref{eq_B-B}, the subgroup
  $\langle B-B \rangle $ for this $A$ is the same as \ref{eq_B-B}.
  Hence the assertion follows. \end{proof}


\begin{thebibliography}{9}



\bibitem{bourbaki} N.~Bourbaki, 
  \'El\'ements de Math\'ematique, Alg\`ebre,  Chapitre 9, Hermann, Paris, 1959.  


\bibitem{CD}
  C.~Casagrande and S.~Di Rocco,
  Projective $\Q$-factorial toric varieties covered by lines,
  Commun. Contemp. Math. \textbf{10} (2008), no. 3, 363--389.







\bibitem{DDP}
  A.~Dickenstein, S.~Di Rocco, and R.~Piene,
  Classifying smooth lattice polytopes via toric fibrations,
  Adv. Math. \textbf{222} (2009), no. 1, 240--254. 



\bibitem{DN}
  A.~Dickenstein and B.~Nill,
  A simple combinatorial criterion for projective toric manifolds with dual defect,
  Math. Res. Lett. \textbf{17} (2010), no. 3, 435--448.



\bibitem{DR}
  S.~Di Rocco,
  Projective duality of toric manifolds and defect polytopes,
  Proc. London Math. Soc. (3) \textbf{93} (2006), no. 1, 85--104. 









\bibitem{FP}
  G. Fischer and J. Piontkowski, \emph{Ruled varieties}, Advanced Lectures in Mathematics, Friedr. Vieweg
  \& Sohn, Braunschweig, 2001.

\bibitem{Fukasawa2005}
  S.~Fukasawa, Developable varieties in positive characteristic. Hiroshima Math. J. {\bf 35} (2005), 167--182.

\bibitem{Fukasawa2006}
  \bysame, Varieties with non-linear Gauss fibers. Math. Ann. {\bf 334} (2006), 235--239.







\bibitem{fukaji2010} 
  S.~Fukasawa and H.~Kaji, 
  Any algebraic variety in positive characteristic admits a projective model with an inseparable Gauss map,
  J. Pure Appl. Algebra {\bf 214} (2010), no. 3, 297--300.


\bibitem{expshr}
  K. Furukawa, Duality with expanding maps and shrinking maps, and its applications to Gauss maps, Math.\ Ann. \textbf{358} (2014), 403--432.


\bibitem{GKZ}
  I.~Gelfand, M.~Kapranov, and A.~Zelevinsky,
  \emph{Discriminants, resultants, and multidimensional determinants},
  Mathematics: Theory \& Applications. Birkh\"{a}user Boston, Inc., Boston, MA, 1994. x+523 pp.

\bibitem{GH}
  P. Griffiths and J. Harris, Algebraic geometry and local differential geometry. Ann. Sci. Ecole Norm.
  Sup. (4) {\bf 12} (1979), 355--432.




  
\bibitem{Kaji1986} H.~Kaji, On the tangentially degenerate curves. J. London Math. Soc. (2) {\bf 33} (1986), 430--440.

\bibitem{Kaji1989} \bysame, On the Gauss maps of space curves in characteristic $p$. Compositio Math. {\bf 70} (1989), 177--197.















\bibitem{Lang} S.~Lang, \emph{Algebra}. Graduate Texts in Mathematics \textbf{211}, Springer-Verlag, New York, third edition, 2002.



\bibitem{MT}
  Y.~Matsui and K.~Takeuchi,
  A geometric degree formula for $A$-discriminants and Euler obstructions of toric varieties,
  Adv. Math. \textbf{226} (2011), no. 2, 2040--2064.

\bibitem{Noma2001} A.~Noma, Gauss maps with nontrivial separable degree in positive characteristic. J. Pure Appl. Algebra {\bf 156} (2001), 81--93.



\bibitem{Rathmann} J.~Rathmann, The uniform position principle for curves in characteristic $p$. Math. Ann. {\bf 276} (1987), 565--579.


\bibitem{Wallace} A. H.~Wallace, Tangency and duality over arbitrary fields. Proc. London Math. Soc. (3) {\bf 6} (1956), 321--342.

\bibitem{Zak}
  F.~L.~Zak, \emph{Tangents and secants of algebraic varieties}.
  Transl. Math. Monographs {\bf 127},  Amer. Math. Soc., Providence, 1993. 

\end{thebibliography}
\end{document}